\newtheorem{theorem}{Theorem}[section]
\newtheorem{proposition}[theorem]{Proposition}
\newtheorem{corollary}[theorem]{Corollary}
\newtheorem{lemma}[theorem]{Lemma}
\theoremstyle{definition}
\numberwithin{equation}{section}
\def\flplp{{\Big(\sum_k\|f_{k}\|_p^p\Big)^{1/p}}}
\def\ellip{\text{{\rm ell}}}
\def\intslash{\rlap{\kern  .32em $\mspace {.5mu}\backslash$ }\int}
\def\qsl{{\rlap{\kern  .32em $\mspace {.5mu}\backslash$ }\int_{Q}}}
\def\vth{\vartheta}
\def\R{\mathbb R}
\def\emph#1{{\it #1 }}
\def\cf{{\it cf}}
\def\loc{{\text{\rm loc}}}
\def\coi{{C^\infty_0}}
\def\dist{{\text{\rm dist}}}
\def\supp{{\text{\rm supp }}}
\def\inn#1#2{\langle#1,#2\rangle}
\def\biginn#1#2{\big\langle#1,#2\big\rangle}
\def\lc{\lesssim}
\def\gc{\gtrsim}
\def\eps{\varepsilon}
\def\la{\lambda}
\def\fB{{\mathfrak {B}}}
\def\bbN{{\mathbb {N}}}
\def\bbR{{\mathbb {R}}}
\def\bbZ{{\mathbb {Z}}}
\def\cB{{\mathcal {B}}}
\def\cE{{\mathcal {E}}}
\def\cF{{\mathcal {F}}}
\def\cL{{\mathcal {L}}}
\def\cQ{{\mathcal {Q}}}
\def\cR{{\mathcal {R}}}
\def\cU{{\mathcal {U}}}
\renewcommand{\le}{\leqslant}
\renewcommand{\ge}{\geqslant}
\begin{document}

\address{Keith Rogers \\
Instituto de Ciencias Matematicas
CSIC-UAM-UC3M-UCM\\
28006 Madrid, Spain} \email{keith.rogers@uam.es}

\address{Andreas Seeger \\ Department of Mathematics \\ University of Wisconsin\\
480 Lincoln Drive\\
Madison, WI, 53706, USA} \email{seeger@math.wisc.edu}

\subjclass{42B15}

\newcommand{\w}{\widehat{f}}
\newcommand{\B}{\mathbb{B}}

\subjclass[2000]{42B15, 35B65}

\author{Keith M. Rogers}
\author{Andreas Seeger}

\title[Maximal and smoothing estimates  for Schr\"odinger equations]{Endpoint maximal and smoothing estimates for Schr\"odinger equations}
\keywords{Schr\"odinger equation, dispersive equations, pointwise convergence, maximal functions, smoothing, space time regularity.}
\thanks{The first author was supported by MEC  project
 MTM2007-60952 and UAM-CM project CCG07-UAM/ESP-1664. The second author was supported in part by  NSF grant DMS  0652890.
}

\begin{abstract} For $\alpha>1$ we consider the initial value problem for the dispersive equation
 $i\partial_tu +(-\Delta)^{\alpha/2} u= 0$.
We prove an endpoint $L^p$ inequality for the maximal function
 $\sup_{t\in[0,1]}|u(\cdot,t)|$ with initial values in
$L^p$-Sobolev spaces, for
 $p\in(2+4/(d+1),\infty)$. This
 strengthens the fixed time estimates
 due to Fefferman and Stein, and Miyachi. As an  essential tool we establish sharp  $L^p$ space-time estimates (local in time) for the same
 range of $p$.
\end{abstract}

\maketitle

\section{Introduction}\label{intro}

For 
$\alpha>1$ we
 consider $L^p$ estimates for solutions  to the initial value problem
$$
\mbox{}\left\lbrace
\begin{array}{l}
i\partial_tu +(-\Delta)^{\alpha/2} u= 0\\
u(\,\cdot\,,0)=f.
\end{array}
\right.
$$
The case $\alpha=2$ corresponds to the Schr\"odinger equation. We
will not consider $\alpha=1$ which corresponds to the wave equation
and exhibits different mathematical features.

When $f$ is a Schwartz function, the solution can be written as
$u(x,t) =U_t^\alpha f(x)$, where
\begin{equation}\label{form}
\widehat{U_t^\alpha f} (\xi) = e^{it |\xi|^\alpha}\widehat
f(\xi)\end{equation} with
$\widehat f(\xi)=\int f(y)e^{-i\inn{y}{\xi} }dy$  as the definition of the
 Fourier transform.
 The sharp endpoint $L^p$-Sobolev bounds for fixed $t$ are
due to Fefferman and Stein \cite{fest} and Miyachi~\cite{mi}. Their
result states that for any compact time interval $I$
 and any $p\in(1,\infty)$,
$$
\sup_{t\in I}\big\|U_t^\alpha f\big\|_{L^p(\R^d)}\le
C_{I\!,p,\alpha} \|f\|_{L^p_{\beta}(\R^d)},\quad
\frac{\beta}{\alpha}= d\Big|\frac12-\frac1p\Big|;
$$
this is sharp with respect to the regularity index $\beta$ and can
also   be deduced from certain endpoint
 versions of the
H\"ormander multiplier theorem (\cite{basa}, \cite{se}).

We strengthen the fixed time estimates as follows.
\begin{theorem}\label{pop1} Let $p\in (2+\frac{4}{d+1},\infty)$ and $\alpha>1$.
Then, for any compact time interval $I$,
\begin{equation}\label{maxestimate}
\big\|\sup_{t\in I}|U_t^\alpha f|\,\big\|_{L^p(\R^d)}\le
C_{I\!,p,\alpha} \|f\|_{L^p_{\beta}(\R^d)},\quad
\frac{\beta}{\alpha}= d\Big(\frac12-\frac1p\Big).
\end{equation}
\end{theorem}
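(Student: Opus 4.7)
The plan is to reduce Theorem \ref{pop1} to the sharp local $L^p$ space-time estimate promised in the abstract, and then to prove that smoothing estimate via a frequency decomposition and extension-type bounds for hypersurfaces with non-vanishing Gaussian curvature.

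For the first step, setting $u(x,t)=U_t^\alpha f(x)$ and writing, for $t_0\in I$,
$$|u(x,t)|^p - |u(x,t_0)|^p = p\int_{t_0}^{t} |u|^{p-2}\Re\bigl(\overline{u}\,\partial_s u\bigr)(x,s)\,ds,$$
then averaging in $t_0\in I$, taking the sup in $t$, integrating in $x$, and applying H\"older twice yields
$$\Bigl\|\sup_{t \in I}|u(\cdot,t)|\Bigr\|_p^p \;\lesssim\; \|u\|_{L^p(\R^d\times I)}^{p-1}\bigl(\|u\|_{L^p(\R^d\times I)} + \|\partial_t u\|_{L^p(\R^d\times I)}\bigr).$$
Since $\partial_t u = iU_t^\alpha[(-\Delta)^{\alpha/2}f]$ costs exactly $\alpha$ derivatives, interpolating in the regularity index reduces Theorem \ref{pop1} to the sharp local smoothing
$$\|U_t^\alpha f\|_{L^p(\R^d\times I)} \;\lesssim\; \|f\|_{L^p_{\beta - \alpha/p}}, \qquad p > 2 + 4/(d+1),$$
that is, a gain of $\alpha/p$ derivatives over the trivial bound one obtains by integrating Fefferman-Stein-Miyachi in $t$.

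To establish this smoothing estimate I would decompose $f=\sum_{k\ge0} P_k f$ by a Littlewood-Paley projection onto $\{|\xi|\sim 2^k\}$, and via the square-function bound in $L^p$ it suffices to prove
$$\|U_t^\alpha P_k f\|_{L^p(\R^d\times I)} \;\lesssim\; 2^{k(\beta-\alpha/p)}\|P_k f\|_p$$
uniformly in $k$. The change of variables $\xi\mapsto 2^k\xi$, $x\mapsto 2^{-k}x$, $t\mapsto 2^{-k\alpha}t$ reduces the task to a unit-frequency operator on the long time window $[0, c\,2^{k\alpha}]$; tiling this window into unit slabs, the estimate becomes a sharp $L^p$ extension-type inequality for the hypersurface $\{(\eta,|\eta|^\alpha):|\eta|\sim 1\}\subset\R^{d+1}$, which has non-vanishing Gaussian curvature precisely because $\alpha>1$. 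The critical input is then an $L^p$ extension bound for such surfaces in the range $p>2+4/(d+1)$. This range is exactly what Tao's bilinear restriction theorem for the paraboloid yields after conversion from bilinear $L^2$ to linear $L^p$; the analogous bound for the surface at hand follows by angular localization together with standard curvature arguments. The expected $2^{k\alpha/p}$ gain appears upon aggregating the $L^p$ norms over the $2^{k\alpha}$ unit slabs, after which the Littlewood-Paley summation in $k$ is harmless.

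The main obstacle is that every step must be sharp at the endpoint: the Sobolev-type embedding in time leaves no slack, the bilinear-to-linear conversion is carried out at the critical exponent, and the Littlewood-Paley summation in $L^p$ must not lose a logarithm. Handling the interaction between the angular localization on the surface and the time-tiling at the long scale $2^{k\alpha}$, so as to produce exactly the right power $2^{k\alpha/p}$ without spurious loss, is likely the most delicate technical point. The hypothesis $\alpha>1$ enters crucially here, since the non-degeneracy of the Gaussian curvature of $\{(\xi,|\xi|^\alpha)\}$ is what powers the extension estimate; the excluded case $\alpha=1$ (the wave equation) behaves differently precisely because the characteristic cone has a vanishing principal curvature.
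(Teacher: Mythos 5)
Your reduction of the maximal bound to smoothing does not close as stated. The fundamental-theorem-of-calculus/H\"older trick gives, at the level of data, $\big\|\sup_{t\in I}|U^\alpha_t f|\big\|_p^p\lc \|f\|_{L^p_{\beta_s}}^{p-1}\|f\|_{L^p_{\beta_s+\alpha}}$ with $\beta_s=\alpha d(\tfrac12-\tfrac1p)-\tfrac\alpha p$, and to reach \eqref{maxestimate} you would need $\|f\|_{L^p_{\beta_s}}^{1-1/p}\|f\|_{L^p_{\beta_s+\alpha}}^{1/p}\lc\|f\|_{L^p_{\beta}}$ with $\beta=\beta_s+\alpha/p$. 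Log-convexity of Sobolev norms goes in the opposite direction, and the inequality is simply false: take $f=f_0+f_k$ with $\widehat{f_0}$ supported where $|\xi|\approx 1$, $\|f_0\|_p=1$, and $\widehat{f_k}$ supported where $|\xi|\approx 2^k$, $\|f_k\|_p=2^{-k\beta}$; then $\|f\|_{L^p_\beta}\approx\|f\|_{L^p_{\beta_s}}\approx 1$ while $\|f\|_{L^p_{\beta_s+\alpha}}\approx 2^{k\alpha(1-1/p)}\to\infty$. So the trick is only valid after Littlewood--Paley localization to a single dyadic block (where it is essentially the time-Sobolev embedding \eqref{sob} used in the paper), and the entire difficulty is then pushed into the summation over blocks.

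That summation is precisely where your proposal has no mechanism, and it is not a matter of avoiding a logarithm: summing the blockwise maximal (or smoothing) bounds by the triangle inequality yields the data norm $\sum_k 2^{k\beta}\|P_kf\|_p=\|f\|_{B^p_{\beta,1}}$, and the square-function-plus-Minkowski route yields $\|f\|_{B^p_{\beta,2}}$; for $p>2$ both Besov spaces are strictly smaller than $L^p_\beta=F^p_{\beta,2}$, so one lands in the wrong space and the endpoint claim is lost. The paper's way around this is the vector-valued inequality of Theorem \ref{trliz}, whose data norm is $\ell^p(L^p)$ (i.e.\ $B^p_{\beta,p}$, which \emph{contains} $L^p_\beta$ for $p\ge2$); it is proved by the Fefferman--Stein sharp-function argument of \S\ref{mainproof}, and the maximal estimate is then extracted through the time-Besov space $B^p_{1/p,1}$ in Corollary \ref{maxcor}, using that spatial frequency $2^k$ forces temporal frequency $\approx 2^{\alpha k}$. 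Separately, your mechanism for the blockwise smoothing bound \eqref{freq} --- tiling $[0,2^{\alpha k}]$ into unit time slabs and invoking an extension estimate --- cannot produce the required $\la^{d(1/2-1/p)}$ growth: the fixed-time multiplier norm at time $t$ is $\approx t^{d(1/2-1/p)}$, so slabwise estimates only reproduce the trivial bound with exponent $d(1/2-1/p)+1/p$, with no gain. The gain of $\la^{1/p}$ (equivalently $\alpha/p$ derivatives) comes from a genuine space-time argument over the whole window: Fefferman-type spatial localization to cubes of side $\la$, a Whitney decomposition near the frequency diagonal, parabolic rescaling, and Tao's bilinear theorem, as in Proposition \ref{ellipticthm}; note also that for $\alpha\neq2$ there is no completing-the-square reduction to a standard extension inequality, which is exactly why the paper proceeds this way.
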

This implies pointwise convergence results; indeed we shall  prove a
little more, namely  if $\chi\in C^\infty_c(\bbR)$
then the function
$t\mapsto \chi(t) U_{t}^\alpha f(x)$  belongs to the Besov space
$B^p_{1/p,1}(\bbR)$, for almost every $x\in \bbR^d$.
In particular these functions are continuous (for
almost every $x$) and therefore this implies
almost everywhere convergence to the initial datum as $t\to 0$.

Our maximal function result is closely related to certain space-time
estimates which improve the regularity index. The first such bounds
are due to
Constantin and Saut \cite{cosaut}, Sj\"olin \cite{sj}, and Vega
\cite{V} who showed  that  better $L^2$ regularity properties hold locally
when $\alpha\in(1,\infty)$; namely, if $f\in
L^2_{-(\alpha-1)/2}(\R^d)$ then $u\in L^2_{\loc}(\R^{d+1})$.
However, it is not possible to replace the $L^2$-norms over compact
sets by $L^2$-norms which are global in space. This is known as the
{\it local smoothing} phenomenon.  For functions in
 $L^2$-Sobolev spaces
the various local and global
 problems
for  smoothing and for maximal operators have received a lot of
attention, starting with \cite{camax}. We do not have a contribution
to the $L^2$-Sobolev problems  but rather  consider
corresponding questions with   initial data in  $L^p$-Sobolev spaces
for $p>2$, with $p$ not close to $2$.

In \cite{ro1} the first author considered $L^p$ regularity estimates
which are global in space but involve an integration over a compact
time interval $I$,
\begin{equation} \label{sm}
\Big(\int_I\|U_t^\alpha f\|^p_{p}\,dt\Big)^{1/p}\le C_I
\|f\|_{L^p_\beta(\bbR^d)}.
\end{equation}
This question was motivated by
the  similar (although deeper) question for the wave equation (\cf.
\cite{so}, \cite{wo1}). In \cite{ro1}, it was proven that (\ref{sm})
holds for $\alpha=2$ when $p> 2+4/(d+1)$ with
$\beta/2>d(1/2-1/p)-1/p$. We remark that
smoothing results of this type could also be deduced from
square-function estimates related to Bochner-Riesz multipliers such
as in \cite{ca-duke}, \cite{ch}, \cite{se0} and \cite{leese}
however these arguments do not apply when
 $d=1$, and in dimensions $d\ge 2$
they  are currently limited to the smaller range $p>2+4/d$.

The $L^p$ smoothing result in \cite{ro1} was obtained from an
$L^p\to L^p$ estimate for the adjoint Fourier restriction  (or \lq
extension') operator associated to the paraboloid, and the range $p>
2+\frac{4}{d+1}$ corresponds to the known range of $L^q\to L^p$
bounds for  the extension operator; see \cite{fe-acta}, \cite{hoer}
and \cite{zyg}  for the sharp bounds when  $d=1$, and \cite{ta} for
the best known partial results for $d\ge 2$. The reduction in
\cite{ro1} to the extension estimate used the explicit
formula
$$e^{it\Delta}f(x)= \frac{1}{(4\pi it)^{d/2}} \int e^{i|x-y|^2/4t} f(y) dy$$
together with a
\lq completing of the square' trick; see \cite{ca} for a similar
argument.
Unfortunately this  reasoning is  not  available when
$\alpha\neq 2$.

We generalize to all $\alpha>1$, and establish the endpoint
regularity result.

\begin{theorem}\label{the}
 Let $p\in(
2+\frac{4}{d+1},\infty)$ and $\alpha>1$. Then, for any compact time
interval $I$,
\begin{equation*}
\Big(\int_I\|U_t^\alpha f\|^p_{p}\,dt\Big)^{1/p}\le C_{I\!,p,\alpha}
\|f\|_{L^p_\beta(\bbR^d)},\quad \frac \beta\alpha= d \Big(\frac 12-
\frac 1p\Big)-\frac1p.
\end{equation*}
\end{theorem}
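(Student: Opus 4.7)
The plan is to reduce the smoothing inequality, via a Littlewood--Paley decomposition and Fourier-side rescaling, to an $L^p$ adjoint restriction (``extension'') estimate for the curved hypersurface $\Sigma_\alpha=\{(\xi,|\xi|^\alpha):\xi\in\R^d\}\subset\R^{d+1}$, and then invoke the sharp extension theory. The reduction in \cite{ro1} for $\alpha=2$ exploited the explicit Schr\"odinger kernel together with a completing-the-square identity, which is specific to $\alpha=2$; I will instead start directly from
\[
 U_t^\alpha g(x)=\frac{1}{(2\pi)^d}\int_{\R^d}e^{i(\inn x\xi+t|\xi|^\alpha)}\widehat g(\xi)\,d\xi,
\]
which exhibits $(x,t)\mapsto U_t^\alpha g(x)$ as the adjoint Fourier restriction of $\widehat g$ to $\Sigma_\alpha$. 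For $\alpha>1$ the principal curvatures of $\Sigma_\alpha$ are uniformly bounded away from zero on every compact piece, so the curvature hypotheses of the extension theory apply on any dyadic annulus.

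Decompose $f=\sum_k P_kf$ with $\widehat{P_kf}$ supported in $|\xi|\sim 2^k$, and set $h_k(x)=(P_kf)(2^{-k}x)$, so $\widehat{h_k}$ is supported on the unit annulus. A direct computation yields the scaling identity $U_t^\alpha P_kf(x)=(U^\alpha_{t\,2^{k\alpha}}h_k)(2^kx)$; rescaling then converts the frequency-localized bound
\[
 \Bigl(\int_I\|U_t^\alpha P_kf\|_p^p\,dt\Bigr)^{1/p}\lesssim 2^{k\beta}\|P_kf\|_p,\qquad \beta=\alpha\bigl(d(\tfrac12-\tfrac1p)-\tfrac1p\bigr),
\]
into the unit-frequency slab estimate
\[
 \Bigl(\int_0^T\|U_s^\alpha h\|_p^p\,ds\Bigr)^{1/p}\lesssim T^{d(\frac12-\frac1p)}\|h\|_p,\qquad T=2^{k\alpha},
\]
for $h$ with $\widehat h$ supported on the unit annulus. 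Splitting the slab into unit-length time intervals and using translation invariance in $t$, the latter follows in turn from an $L^p$ adjoint restriction estimate on a $(d+1)$-dimensional ball of radius $\sim T^{1/\alpha}$ for the extension operator associated to $\Sigma_\alpha$: the Fefferman--Zygmund theorem for $d=1$, and Tao's bilinear extension theorem combined with the Whitney bilinear-to-linear reduction for $d\ge2$. In both cases the full range $p>2+\tfrac{4}{d+1}$ is covered.

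It remains to assemble the frequency-localized bounds into a global $L^p_\beta\to L^p_{t,x}$ inequality at the endpoint regularity $\beta$. With a cutoff $\chi\in C_c^\infty(\R)$ equal to $1$ on $I$, the space-time Fourier transform of $\chi(t)U_t^\alpha P_kf$ is essentially supported in $\{(\xi,\tau):|\xi|\sim 2^k,\,\bigl|\tau-|\xi|^\alpha\bigr|\lesssim 1\}$, and these sets have pairwise disjoint $\xi$-projections. A Littlewood--Paley square-function identity in $\R^{d+1}$ therefore reduces the global bound to the vector-valued inequality
\[
 \Bigl\|\Bigl(\sum_k|U_t^\alpha h_k|^2\Bigr)^{1/2}\Bigr\|_{L^p_{t,x}}\lesssim \Bigl\|\Bigl(\sum_k2^{2k\beta}|h_k|^2\Bigr)^{1/2}\Bigr\|_{L^p_x}
\]
for $h_k$ with $\widehat{h_k}$ supported in $|\xi|\sim 2^k$, which is obtained by a square-function upgrade of the bilinear extension estimate (or equivalently by Rademacher randomization, exploiting the fact that $p$ sits strictly above the critical exponent so the scalar extension estimate has room to spare).

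The main obstacle is that the regularity $\beta$ lies exactly at the endpoint, so the summation over the Littlewood--Paley pieces must be performed without logarithmic loss. Once the scalar frequency-localized estimate is in place with the exact power $T^{d(1/2-1/p)}$ (and not $T^{d(1/2-1/p)+\epsilon}$), the square-function upgrade of the previous paragraph is what allows the sharp summation. A secondary technical difficulty is transporting the bilinear-to-linear machinery from the paraboloid (as used in \cite{ro1}) to the general surface $\Sigma_\alpha$; this goes through uniformly because only curvature and non-degeneracy properties of $\Sigma_\alpha$, and not its explicit algebraic form, enter the bilinear extension argument.
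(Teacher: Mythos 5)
Your reduction to the frequency--localized estimate and its rescaled form $\bigl(\int_0^T\|U^\alpha_s h\|_p^p\,ds\bigr)^{1/p}\lesssim T^{d(1/2-1/p)}\|h\|_p$ for unit-annulus data is the same starting point as the paper (this is Proposition \ref{ellipticthm} with $\la\approx 2^{\alpha k}$), but both of your key mechanisms are gaps. First, the frequency--localized bound does not follow by ``splitting the slab into unit-length time intervals and using translation invariance in $t$'': translating in $t$ by an amount $\sim T$ conjugates by $U^\alpha_T$, whose $L^p$ operator norm on unit-frequency data is itself $\sim T^{d(1/2-1/p)}$, so this scheme reproduces the fixed-time bound with an extra $T^{1/p}$ loss --- exactly the $1/p$ derivative the theorem is supposed to gain. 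Moreover, since $U^\alpha_t g=\cE\widehat g$ for $\alpha\neq 2$ (no completing-the-square identity), a local extension estimate by itself bounds the solution by norms of $\widehat g$ or by $\|g\|_2$, not by $\|g\|_{L^p}$; converting Tao's $L^2\times L^2\to L^{p/2}$ bilinear bound into an $L^p$-data bound is precisely the Fefferman-type spatial localization at scale $T$ (tiling by cubes of side $\la$, kernel decay off $Q_*$, Plancherel and H\"older on each cube) carried out in Lemma \ref{1-separated}, together with the Whitney decomposition and rescaling of Lemmas \ref{smallballsii}--\ref{smallballsi}. Your one-line appeal to ``an $L^p$ adjoint restriction estimate on a ball of radius $\sim T^{1/\alpha}$'' (the radius should in any case be $\sim T$ after rescaling) hides this entire step.

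The more serious gap is the endpoint summation, which you correctly identify as the main obstacle but do not actually resolve. The vector-valued inequality $\|(\sum_k|U^\alpha_t h_k|^2)^{1/2}\|_{L^p_{t,x}}\lesssim\|(\sum_k 2^{2k\beta}|h_k|^2)^{1/2}\|_{L^p_x}$ does not follow from the scalar frequency-localized estimates by Rademacher randomization: randomizing turns the left side into $\mathbb E\|U^\alpha_t(\sum_k\pm h_k)\|_{L^p_{t,x}}$, and estimating that requires a bound for the full operator on non-frequency-localized data, which is the theorem itself --- the argument is circular. Having ``room to spare'' in $p$ is irrelevant, since the endpoint is in the regularity index $\beta(p)$ at fixed $p$. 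The soft alternative (Minkowski for $p\ge2$) only yields a bound by $\bigl(\sum_k 2^{2k\beta}\|P_kf\|_p^2\bigr)^{1/2}\approx\|f\|_{B^p_{\beta,2}}$, which is strictly weaker than the asserted $\|f\|_{L^p_\beta}$ bound because for $p>2$ one only has $B^p_{\beta,2}\hookrightarrow L^p_\beta\hookrightarrow B^p_{\beta,p}$; and a genuine ``square-function upgrade'' of the bilinear extension estimate is not available off the shelf (as noted in the paper, such square-function technology fails for $d=1$ and is limited to $p>2+4/d$ for $d\ge2$). The paper closes this gap by a different device: it proves the $\ell^p(L^p)$ vector-valued estimate \eqref{trlizest} of Theorem \ref{trliz} via a Fefferman--Stein sharp maximal function argument, exploiting that the kernel of $T_k$ is concentrated in $|x|\lesssim 2^{k(\alpha-1)}$ (estimate \eqref{smallness}), a temporal Sobolev embedding \eqref{sob}, and the scalar bound \eqref{freq} at an auxiliary exponent $r\in(2+\tfrac4{d+1},p)$, and then concludes via $L^p_\beta\hookrightarrow B^p_{\beta,p}$. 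Some substitute for this sharp-function (or an equally strong) summation argument is missing from your proposal.
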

In Theorem \ref{trliz} below we formulate a slightly improved version
of this result which can also be used to prove Theorem \ref{pop1}.
We remark that for $d=1$ our arguments also give the analogous results for the range $0<\alpha<1$.

We mention an application in one spatial dimension where we obtain
 sharp  estimates for the initial value problem for the Airy equation
\begin{equation}\label{airy}
u_t+ u_{xxx} = 0.
\end{equation}
For $f:= u(\cdot,0)$ a Schwartz function, we can write
$u(\cdot, t)=
U^{3}_{t}P_+ f + U^{3}_{-t} P_-f$, where $P_+$ and $P_-$ are the
projection operators with Fourier multipliers $\chi_{(0,\infty)}$
and $\chi_{(-\infty,0)}$, respectively. Thus,
for initial values in $L^p_\beta$  the solution of \eqref{airy} satisfies the sharp bound
$$
\|u\|_{L^p(\bbR\times[-T,T])}\le C_{T} \|u(\cdot,0)\|_{L^p_\beta(\R)},
\quad\beta= \frac{3(p-4)}{2p}, \quad 4<p<\infty,
$$
and if $u(\cdot,0)\in L^p_\eps(\bbR)$ for any
$\eps>0$ with $2<p\le 4$, then  $u\in
L^p(\bbR\times[-T,T])$.

\medskip

The proofs will be based on the bilinear adjoint restriction theorem
for elliptic surfaces due to Tao \cite{ta}. In \S\ref{ell}, having
discussed the necessary conditions in \S\ref{necessary},  we combine
Tao's theorem with
 a variation of a localization technique employed in \cite{fe}  to
prove  $L^p$ estimates for some oscillatory integrals with elliptic
phases; this yields the smoothing estimate for functions which are
frequency supported in an annulus. In \S\ref{localtoglobal}, we
extend to the general case
 by decomposing the Fefferman-Stein sharp
function; here we use a variant of an argument in~\cite{se}.

\medskip{\it Notation.}
Throughout, $c$ and $C$ will denote positive constants that may
depend on the dimensions, exponents or indices of the Sobolev
spaces, or the parameter $\alpha$, but never on the functions.
 Such constants are called admissible and
their values may change from line to line. We shall mostly  use the
notation $A\lc B$ if $A\le CB$ for an admissible constant~$C$. We
may sometimes indicate the dependence on a specific  parameter $c$
by using the notation~$\lc_c$. We write $A\approx B$ if $A\lc B$ and
$B\lc A$.

\section{Necessary conditions}\label{necessary}

Let $\theta$  be a nonnegative  and smooth function supported in
$\{2^{-1}<|\xi|<2\}$
 and equal to 1 in $\{2^{-1/2}<|\xi|<2^{1/2}\}$.
For large $\la$, we consider initial data $f_\la$ defined by
$
\widehat{f}_\la(\xi)=
e^{-i|\xi|^\alpha}\theta(\la^{-1}\xi)
$
and note that, by a change of variables,
$$
f_\la(x)=\left(\frac{\la}{2\pi}\right)^d\int\theta(\xi)
e^{i(\inn{\la x}{\xi}-\la^\alpha|\xi|^\alpha)}d\xi.
$$
Thus $|f_\la(x)|\lc \la^{d-\frac{d\alpha}{2}}$, by  the method of
stationary phase (keeping in mind that $\alpha\neq 1$). On the other
hand, when $|x|\gg  \la^{\alpha-1}$, by repeated integration by
parts, there exists constants $C_N$ such that $|f_\la(x)|\le C_N
(|x|\la^{1-\alpha})^{-N}$ for all $N\in\bbN$. Combining the two
bounds, we see that
\begin{equation*}
\|f_\la\|_{L^p_\beta(\bbR^d)}
\approx \la^\beta \|f_\la\|_{L^p(\bbR^d)} \lc
\la^{{d-\frac{d\alpha}{2}}+\frac{d(\alpha-1)}{p}+\beta}.
\end{equation*}

Next  we consider
$U_t^\alpha
f_\la$
and compute
\begin{align*}
|U_t^\alpha f_\la(x)|&=\Big|
\left(\frac{\la}{2\pi}\right)^{d}\int_{\bbR^d}\theta(\xi) e^{
i(\inn{\la x}{\xi}+ \la^\alpha(t-1)|\xi|^\alpha)}d\xi\Big|,
\end{align*}
so when $|x|\le(10\la)^{-1} $ and $|t-1|\le (10\la^\alpha)^{-1},$ we
have $|U_t^\alpha f_\la(x)|\ge c\la^{d}$ for some positive constant
$c$. Thus,
$$
\Big(\int_{1-(10\la^\alpha)^{-1}}^1 \|U_t^\alpha f_\la\|_p^p
\,dt\Big)^{1/p} \ge C \la^{d-\frac{d+\alpha}{p}}.
$$
Comparing this with the upper bound for
$\|f_\la\|_{L^p_\beta(\bbR^d)}$, and letting $\la\to \infty$, we see
that $\beta/\alpha\ge d(1/2-1/p)-1/p$  is a necessary condition for
(\ref{sm}) to hold when $\alpha\neq 1$.

Note that alternatively one can argue that by Sobolev embedding
 any improvement in the smoothing would give  a better fixed time
estimate than the sharp known bounds in \cite{fest}, \cite{mi}, which is impossible.

The range $p>2+4/(d+1)$ for the smoothing estimate in Theorem \ref{the}
is sharp
for  $d=1$, and for $d\ge 2$  it is conceivable that it holds for
$p>2+2/d$, see \cite{ro1}.

For Theorem~\ref{pop1} however
our range may not be sharp even in one dimension.
We can say that
   the maximal estimate \eqref{maxestimate}
 cannot hold when $p<2+1/d$. This follows from the
necessary condition
 $\beta/\alpha\ge 1/2p$ which we now show, modifying a
 calculation in \cite{dake}.

Let $\chi$ be a nonnegative and smooth function supported in
$(-\eps, \eps)$ where $\eps$ will be small depending  only on
$\alpha$. Let $e_1=(1,0,\dots,0)$ and define
$$g_\la(x)= \frac{1}{(2\pi)^{d}}\int \chi(\la^{\frac{\alpha-2}{2}}|\xi+\la e_1|) e^{i\inn x\xi} d\xi.$$
Then immediately
\begin{equation*}
\|g_\la\|_{L^p_\beta} \lc \la^{\beta + \frac{d(\alpha-2)}{2} (\frac
1p -1)}.\end{equation*} Now
\begin{align*}
U^\alpha_tg_\la(x)&= \frac{1}{(2\pi)^{d}}\int
\chi(\la^{\frac{\alpha-2}{2}}|\xi+\la e_1|) e^{i(\inn
x\xi+t|\xi|^\alpha)} d\xi
\\
&=  \frac{1}{(2\pi)^{d}} \int \chi(\la^{\frac{\alpha-2}{2}}|h|)
e^{i\phi_\la(x,t,h)} dh
\end{align*}
where $\phi_\la(x,t,h)= t \la^\alpha |-e_1+h/\la|^\alpha +
\inn{x}{-\la e_1+h}$. A Taylor expansion gives for $|h|\ll \la$
$$\phi_\la(x,t,h)=t\la^\alpha-x_1 \la  + \inn{x- t\alpha \la^{\alpha-1} e_1}{h} +O(\la^{\alpha-2}h^2)$$
where the implicit constants in the error term depend on $\alpha$. The error term in the phase is $\ll 1$  on the support of the cutoff function
(provided that $\eps$ is sufficiently small).

Let $0<c\ll \alpha$  and let $R$ be the rectangle where $0\le x_1\le
c \la^{\alpha-1}$, and $|x_i|\le \la^{(\alpha-2)/2}$ for
$i=2,\dots,d$. We define $t(x)=\alpha^{-1}\la^{1-\alpha}x_1$ for
$x\in R$ so that $t(x)\in [0,1]$ for  $x\in R$,
 and for $x\notin R$ we may choose any (measurable) $t(x)\in [0,1]$. Then for $x\in R$, we have
$|U^\alpha_{t(x)}g_\la(x)|\ge c_0 \la^{-d(\alpha-2)/2}$ and thus
$$\big\|\sup_{0\le t\le 1}|U^\alpha_t g_\la|\, \big\|_p\ge
\|U^\alpha_{t(\cdot)}g_\la\|_p\gc
\la^{\frac{\alpha-1}{p}+\frac{(\alpha-2)(d-1)}{2p}-\frac{(\alpha-2)d}{2}}.$$
Comparing with the upper bound for $\|g_\la\|_{L^p_\beta}$
   leads to the condition $\beta/\alpha\ge 1/2p$.

\section{$L^p$ estimates for oscillatory integrals with elliptic phases}
\label{ell}

In the sequel, we will rescale inequalities for $U^\alpha_t$ when
acting on functions with compact frequency support. This process
will give rise to the operator $S$ defined by
\begin{equation}  \label{Sdef}
Sf(x,t) \equiv S^\phi_\chi f(x,t) = \frac{1}{(2\pi)^{d}} \int
\chi(\xi)e^{it\phi(\xi)} \widehat f(\xi) e^{i\inn{x}{\xi}}d\xi
\end{equation}
where $\chi\in \coi(\cU)$ and $\phi$ is {\it elliptic}; here
 a $C^\infty $ function
$\phi$ on an open set $\cU $ in $\bbR^d$  is called  elliptic  if
for every $\xi\in \cU$ the Hessian $\phi''$ is positive definite.

We ask for $L^p(\bbR^d)\to L^p(\bbR^d\times [0,\la])$ bounds for
$S$. Note that  for $|t|\le 1$ and $\chi\in \coi$ the function
$\chi e^{it\phi}$ is a Fourier multiplier of $L^p$, $1\le
p\le\infty$, and consequently the  question is only nontrivial for
large $\la$.

\begin{proposition} \label{ellipticthm}
Let $p>2+\frac{4}{d+1}$, $\chi\in \coi(\cU)$,  and let $\phi$ be an
 elliptic
phase  on $\cU$. Then
$$\|Sf\|_{L^p(\bbR^d\times [-\la,\la])}\lc \la^{d(1/2-1/p)}
\|f\|_{L^p(\bbR^d)}.$$
\end{proposition}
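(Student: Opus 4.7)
The strategy is to recognize $Sf$ as an extension operator for the elliptic hypersurface $\Sigma = \{(\xi,\phi(\xi)) : \xi \in \supp \chi\} \subset \mathbb{R}^{d+1}$, and to combine the resulting global $L^2\to L^p$ extension estimate with a spatial localization of $f$ at scale $\lambda$, in the spirit of Fefferman's technique in \cite{fe}.

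The space-time Fourier transform of $Sf$ is supported on $\Sigma$ with density proportional to $\chi(\xi)\hat f(\xi)$. Since $\phi$ is elliptic, $\Sigma$ has non-vanishing Gaussian curvature on $\supp\chi$, and Tao's bilinear restriction theorem \cite{ta}, combined with the standard passage from bilinear to linear estimates, yields the global bound
\[
\|Sg\|_{L^p(\mathbb{R}^{d+1})} \lesssim \|g\|_{L^2(\mathbb{R}^d)}, \qquad p > 2+\tfrac{4}{d+1},
\]
where Plancherel has been used to pass from $\|\chi\hat g\|_{L^2(d\xi)}$ to $\|g\|_{L^2}$.

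I would then cover $\mathbb{R}^d$ by a lattice of pairwise disjoint cubes $\{Q_j\}$ of sidelength $\lambda$, and choose smooth cutoffs $\tilde\psi_j$ equal to $1$ on $2Q_j$ and supported in $CQ_j$, for an absolute constant $C$ to be taken large. On $Q_j\times[-\lambda,\lambda]$ decompose $Sf = S(\tilde\psi_j f) + S\bigl((1-\tilde\psi_j)f\bigr)$. The global extension estimate together with H\"older (since $\tilde\psi_j f$ is supported in a set of measure $\sim\lambda^d$) gives
\[
\|S(\tilde\psi_j f)\|_{L^p(\mathbb{R}^{d+1})} \lesssim \|\tilde\psi_j f\|_{L^2} \lesssim \lambda^{d(1/2-1/p)} \|\tilde\psi_j f\|_{L^p}.
\]
Raising to the $p$-th power, summing in $j$, and invoking the bounded overlap of the $\tilde\psi_j$, the total contribution of these main terms is bounded by $\lambda^{d(1/2-1/p)} \|f\|_{L^p(\mathbb{R}^d)}$, as required.

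The remaining and most delicate step is to absorb the summed tails $\sum_j \|S((1-\tilde\psi_j)f)\|_{L^p(Q_j\times[-\lambda,\lambda])}^p$, since a crude triangle inequality over the infinitely many cubes $Q_j$ would lose all control. For this I would use the rapid decay of the kernel $K_t(z) = (2\pi)^{-d}\int \chi(\xi)e^{it\phi(\xi)+iz\cdot\xi}\,d\xi$: repeated integration by parts in $\xi$, exploiting that $\chi$ is compactly supported and $|z+t\nabla\phi(\xi)|\approx|z|$ on $\supp\chi$ whenever $|z|\geq C_0|t|$, yields $|K_t(z)|\leq C_N(1+|z|)^{-N}$ for every $N$ in that regime. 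For $x\in Q_j$, $|t|\leq\lambda$, and $y$ in the support of $1-\tilde\psi_j$ one has $|x-y|\gtrsim\lambda\geq|t|$ once $C$ is chosen large enough, so the tail kernel decays rapidly in $|x-y|$. Partitioning the complement of $\supp\tilde\psi_j$ into dyadic annuli $A_{k,j} = \{y : 2^k\lambda\leq|y-x_j|<2^{k+1}\lambda\}$ for $k\geq 1$, dominating $\|f\|_{L^1(A_{k,j})}$ by $(2^k\lambda)^{d/p'}\|f\|_{L^p(A_{k,j})}$ via H\"older, and reversing the summation in $j$ and $k$ by Minkowski (using that each $y\in\mathbb{R}^d$ lies in $O(2^{kd})$ of the annuli $A_{k,j}$), one sees that for $N$ sufficiently large the tail contribution is of admissible size, comfortably smaller than $\lambda^{d(1/2-1/p)}\|f\|_{L^p}$.
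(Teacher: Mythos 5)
The main step of your argument is not available: the global linear estimate $\|Sg\|_{L^p(\bbR^{d+1})}\lc\|g\|_{L^2(\bbR^d)}$ is \emph{false} for $2+\frac{4}{d+1}<p<2+\frac 4d$, which is exactly the part of the range that makes Proposition \ref{ellipticthm} interesting. The Knapp example shows this: taking $\widehat g$ to be the indicator of a cap of radius $\delta$, one has $\|g\|_2\approx\delta^{d/2}$ while $|Sg|\gc\delta^{d}$ on a dual slab of measure $\approx\delta^{-d-2}$, so $\|Sg\|_p/\|g\|_2\gc\delta^{d/2-(d+2)/p}\to\infty$ as $\delta\to0$ whenever $p<2(d+2)/d$. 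For a hypersurface in $\bbR^{d+1}$ with nonvanishing curvature, $L^2$-based adjoint restriction holds only down to the Stein--Tomas exponent $2+4/d$; Tao's bilinear theorem \eqref{terry} beats this only because the two inputs have separated frequency supports, and the standard bilinear-to-linear passage recovers linear bounds with $L^q$ ($q>2$, ultimately $L^p$) data, not $L^2$ data, below $2+4/d$. With your claimed $L^2\to L^p$ bound the rest of your scheme would indeed work, but it would only prove the proposition for $p\ge 2+4/d$, which is an essentially known and much easier statement that does not suffice for the theorems of the paper.

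This is precisely why the paper never linearizes the bilinear estimate globally. Instead it keeps the bilinear structure throughout: $(Sf)^2$ is decomposed via a Whitney-type decomposition near the frequency diagonal into pieces $\cB_j[f,f]$ with $|\xi-\eta|\approx 2^j\la^{-1/2}$; the separated pieces are handled by the rescaled bilinear estimate of Lemma \ref{smallballsii} (which reduces by parabolic scaling to Lemma \ref{1-separated}), the diagonal piece $j=0$ by the trivial Lemma \ref{smallballsi}, and the bound \eqref{claimbj} carries geometric decay in $j$ allowing summation. Your Fefferman-style localization to cubes of side $\la$, the H\"older step $\|\chi_{Q_*}f\|_2\lc\la^{d(1/2-1/p)}\|\chi_{Q_*}f\|_p$, and the treatment of the tails by kernel decay and bounded overlap are all sound and do appear in the paper --- but inside the proof of the \emph{bilinear} Lemma \ref{1-separated}, where the near pieces are estimated with Tao's bilinear inequality applied to the two localized, frequency-separated factors, rather than with a (nonexistent) linear $L^2\to L^p$ extension bound. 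To repair your proof you would need to replace the global linear estimate by this bilinear decomposition, at which point you essentially recover the paper's argument.
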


The key ingredient will be Tao's bilinear estimate for the adjoint
restriction operator \cite{ta} which applies to phases which are
small perturbations of $|\xi|^2/2$. We need to formulate more
specific assumptions on the phases allowed and follow \cite{tavave}.
Let $N\ge 10 d$. We say $\phi :[-2,2]^d\to \bbR$ is a phase  of the
class $\Phi(N,A)$ if $|\partial^{\alpha_j}_{x_j} \phi(x)|\le A$ for
all $x\in [-2,2]^d$ and all $|\alpha_j|\le N$, where $j=1,\ldots,d$.
To add an ellipticity condition we say that $\phi$ is of class
$\Phi_\ellip(\eps,  N, A)$ if $\phi(0)=\nabla\phi(0)=0$, and if for
all $x\in [-2,2]^d$ the eigenvalues of the Hessian $\phi''(x)$ lie
in $[1-\eps, 1+\eps]$.

We define the adjoint restriction operator $\cE\equiv \cE^\phi$ by
$$
\cE h(x,t)= \int_{[-2,2]^d}  e^{i(\inn{x}{\xi} +t\phi(\xi))} h(\xi)
d\xi.
$$
so that
  $Sf =(2\pi)^{-d}\cE \widehat f$, where $\cU=(-2,2)^d$. Now
Tao's theorem can be stated as follows. Suppose $p>2+\frac{4}{d+1}$.
Then
 there exists an $N$ (depending on $d$ and $p$) and for $A\ge 1$ there exists
 $\eps =\eps (A,N,d,p)>0$ so that the following holds for $\phi \in \Phi(\eps,  N, A)$: For all
pairs of $L^2$ functions $h_1$, $h_2$ so that $\dist(\supp(h_1),
\supp (h_2))\ge c>0$ the inequality
\begin{equation}\label{terry}
\big\| \cE h_1\cE h_2\big\|_{p/2} \lc_c \|h_1\|_2 \|h_2\|_2, \quad
p>2+\frac{4}{d+1},
\end{equation}
holds. In what follows we fix $N$, $A$ and $\eps$ for which Tao's
theorem applies. The constants may all depend on these parameters.

\begin{lemma}\label{1-separated}
 Let $p>2+\frac{4}{d+1}$, let $B_1$, $B_2\subset[-1,1]^d$ be balls so that $\dist (B_1, B_2) \ge c$, and let $\phi\in
\Phi_\ellip(\eps,N,A)$. Then for $f$, $g$ with $\supp \widehat
f\subset B_1$, $\supp \widehat f\subset B_2$,
\begin{equation*}
\big\|Sf\, Sg\big\|_{L^{p/2}(\bbR^d\times[0,\la])} \lc_{c,p} \la^{d
(1-2/p)} \|f\|_{L^p(\R^d)} \|g\|_{L^p(\R^d)}\,.
\end{equation*}
\end{lemma}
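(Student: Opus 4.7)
The plan is to bootstrap Tao's bilinear $L^2\times L^2\to L^{p/2}$ extension estimate \eqref{terry} to the desired $L^p\times L^p\to L^{p/2}$ bound by localizing both inputs in physical space on the characteristic scale $\la$. On a $\la$-cube, H\"older converts the $L^p$ norm into the $L^2$ norm at a cost of exactly $\la^{d(1/2-1/p)}$ per factor, which matches precisely the loss allowed by the conclusion.

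Fix a smooth partition of unity $1=\sum_Q\psi_Q$ where $Q$ ranges over a lattice of cubes of side length $\la$, with $\psi_Q$ supported in (a slight enlargement of) $Q$, and set $f_Q=\psi_Q f$, $g_{Q'}=\psi_{Q'}g$. Since $\widehat{\psi_Q}$ is Schwartz with concentration on scale $\la^{-1}$, the Fourier supports of $f_Q$ and $g_{Q'}$ are only mildly smeared from $B_1,B_2$. A Schwartz-tail decomposition writes each $\widehat{f_Q}$ as a rapidly convergent sum of functions supported in successively larger enlargements of $B_1$, with $L^2$ norms decaying faster than any polynomial, and likewise for $g_{Q'}$. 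Applying Tao's estimate \eqref{terry} to each such component (the $c/2$-separation persists for $\la$ sufficiently large), summing, and invoking $Sf_Q=(2\pi)^{-d}\cE\widehat{f_Q}$, one obtains for each pair
\[
\|Sf_Q\,Sg_{Q'}\|_{L^{p/2}(\bbR^d\times[0,\la])}\lc \|f_Q\|_2\,\|g_{Q'}\|_2\lc \la^{d(1-2/p)}\|f_Q\|_p\,\|g_{Q'}\|_p,
\]
the last step being H\"older applied on $\supp\psi_Q$ and $\supp\psi_{Q'}$.

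For the assembly I use finite speed of propagation: since $\phi$ is smooth with bounded gradient on $\cU$, repeated integration by parts in $\xi$ shows that $Sf_Q(x,t)$ has Schwartz decay in $\dist(x,Q)/\la$ uniformly for $t\in[0,\la]$, and likewise for $Sg_{Q'}$. Hence the products $\{Sf_Q\,Sg_{Q'}\}$ have essentially disjoint spacetime supports outside a neighbourhood of close pairs $\dist(Q,Q')\lc\la$, and almost-orthogonality in $L^{p/2}$ (valid since $p>2$) gives
\[
\|Sf\,Sg\|_{L^{p/2}(\bbR^d\times[0,\la])}^{p/2}\lc \sum_{\dist(Q,Q')\lc\la}\|Sf_Q\,Sg_{Q'}\|_{L^{p/2}(\bbR^d\times[0,\la])}^{p/2}.
\]
Inserting the per-pair bound and applying Cauchy--Schwarz in $\ell^2$, together with the fact that each $Q$ has only $O(1)$ neighbours, yields the lemma after taking $(p/2)$-th roots.

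The main obstacle is the first step: Tao's theorem requires $L^2$ inputs with \emph{compactly} supported, $c$-separated Fourier supports, whereas the physical-space cutoff $\psi_Q$ smears the Fourier supports of $f_Q,g_{Q'}$ beyond $B_1,B_2$. Handling these Schwartz tails uniformly in $Q$, so that the associated geometric sums do not inflate the constants with $\la$, is the central technical task. The finite-speed-of-propagation step and the combinatorial assembly at the end are routine once the bilinear bound per pair has been secured.
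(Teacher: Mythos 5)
Your overall strategy---localize to spatial cubes of side $\la$, convert $L^p$ to $L^2$ by H\"older at the cost $\la^{d(1/2-1/p)}$ per factor, apply Tao's bilinear estimate \eqref{terry}, and dispose of far interactions by integration by parts---is the same as the paper's. The difference is where the localization is performed, and that is exactly where your argument has a gap. You cut $f$ and $g$ by a smooth partition of unity before applying $S$, so that $\widehat{f_Q}=\widehat{\psi_Q}*\widehat f$ is no longer supported in $B_1$, and you propose to decompose it into pieces supported in successively larger enlargements of $B_1$, applying \eqref{terry} to each piece on the grounds that ``the $c/2$-separation persists for $\la$ sufficiently large.'' That assertion is false: the separation between an enlargement of $B_1$ and $B_2$ depends on the size of the enlargement, not on $\la$, so the tail pieces living at distance $\gtrsim c$ from $B_1$ (in particular those overlapping $B_2$) never satisfy the hypothesis of \eqref{terry}, no matter how large $\la$ is. These pieces do have rapidly decaying norms in $\la$, but since Tao's theorem cannot be applied to them you must supply a substitute estimate, e.g.\ the crude bound $\|\cE h_1\,\cE h_2\|_{L^{p/2}(\bbR^d\times[0,\la])}\lc \la^{2/p}\|h_1\|_2\|h_2\|_2$ (Plancherel and $L^\infty$ for fixed $t$, interpolation, then integration in $t$), whose polynomial loss is beaten by the rapid decay. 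Without some such estimate the step you yourself flag as the crux does not close; moreover the tail of $\widehat{f_Q}$ is controlled by a weighted global norm of $f$ rather than by $\|f_Q\|_2$, so the summation over $Q$ of these error terms also needs to be organized quantitatively.

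The paper avoids the smearing issue altogether by a small but decisive twist: it never cuts $f$ before exploiting the frequency support. Since $\supp\widehat f\subset B_1$, one has $Sf=SP_1f$ where $P_1$ is the Fourier multiplier with symbol $\eta_1\in\coi$ equal to $1$ on $B_1$ and supported in a slightly larger ball $\widetilde B_1$ still separated from $\widetilde B_2$ by $c/2$. For each output cube $Q$ one then splits $f=f\chi_{Q_*}+f\chi_{\bbR^d\setminus Q_*}$ with a sharp cutoff and applies \eqref{terry} to $SP_1[f\chi_{Q_*}]\,SP_2[g\chi_{Q_*}]$: the inputs $\eta_1\widehat{f\chi_{Q_*}}$ and $\eta_2\widehat{g\chi_{Q_*}}$ are supported exactly in the separated compact sets $\widetilde B_1$, $\widetilde B_2$, so no tail sum is needed at all, and the remaining terms are handled by the kernel decay \eqref{error}, which is precisely your ``finite speed of propagation'' step. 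If you insert this device---or carry out your Schwartz-tail decomposition together with an explicit crude estimate for the non-separated pieces---your argument becomes a correct proof along essentially the paper's lines; the final assembly by bounded overlap of the near pairs and Cauchy--Schwarz in $Q$ is the same bookkeeping the paper performs.
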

\begin{proof} Let $C_0=10 (1+ \max_{\xi\in [-2,2]^d} |\nabla \phi(\xi)|)$, and let $\eta_1, \eta_2\in \coi$ be  supported in $(-2,2)^d$ so that
$\eta_1(\xi)=1$ on $B_1$ and $\eta_2(\xi_2)=1$ on $B_2$. Moreover
assume that $\eta_1$ and $\eta_2$ are supported on slightly larger
concentric balls $\widetilde B_1$, $\widetilde B_2$ with the
property that $\dist (\widetilde B_1,\widetilde B_2)\ge c/2$. We
also set
$$P_i f= \cF^{-1}[\eta_i \widehat f],\quad i=1,2.$$
Let $K^i_t= \cF^{-1}[e^{it\phi}\eta_i\chi ]$, for $i=1,2$, so that
$$S_i f(x,t):= SP_i f(x,t)=K^i_t*f(x).$$ Then $Sf\, Sg = S_1 f \,S_2 g$.
We first note that for all $t\in [-\la,\la]$
\begin{equation}\label{error}|K_t^i(x)|\lc |x|^{-N}, \quad \text{ if } |x|\ge C_0\la
\end{equation} This follows by a straightforward $N$-fold integration by
parts, which uses the inequality $|\nabla_\xi
(\inn{x}{\xi}+t\phi(\xi))| \ge |x|/2$ if $|x|\ge C_0\la$, $|t|\le
\la$.

Now let $\cQ(\la)$ be a tiling of $\bbR^d$ by cubes of sidelength
$\la$, and for each $Q\in \cQ(\la)$ let $Q_*$ denote the enlarged
cube with sidelength $2C_0\la$, with the same center as $Q$.  For
each cube we split each function into a part supported in $Q_*$ and
a part supported in its complement. Thus we can  write
$$
\big\|Sf\, Sg\big\|_{L^{p/2}(\bbR^d\times[0,\la])}^{p/2} = I +II
+III +IV
$$ where
\begin{align*}
I&=\sum_{Q\in \cQ(\la)} \big\|S_1[f\chi_{Q_*}]\,
S_2[g\chi_{Q_*}]\big\|_{L^{p/2}(Q\times[0,\la])}^{p/2}\,,
\\
II&=\sum_{Q\in \cQ(\la)} \big\|S_1[f\chi_{Q_*}]\,
S_2[g\chi_{\bbR^d\setminus Q_*}]
\big\|_{L^{p/2}(Q\times[0,\la])}^{p/2}\,,
\\
III&=\sum_{Q\in \cQ(\la)} \big\|S_1[f \chi_{\bbR^d\setminus Q_*}]\,
S_2[g\chi_{Q_*}] \big\|_{L^{p/2}(Q\times[0,\la])}^{p/2}\,,
\\
IV&=\sum_{Q\in \cQ(\la)} \big\|S_1[f\chi_{\bbR^d\setminus Q_*}]\,
S_2[g\chi_{\bbR^d\setminus
Q_*}]\big\|_{L^{p/2}(Q\times[0,\la])}^{p/2}\,.
\end{align*}
The first term gives the main contribution and is
 estimated using Tao's theorem, i.e. \eqref{terry}.
One obtains,
\begin{align*}
|I|&\le \sum_{Q\in \cQ(\la)} \big\|S\,P_1[f\chi_{Q_*}]\,
 S\,P_2[g\chi_{Q_*}]\big\|_{L^{p/2}
(\bbR^d\times\bbR)}^{p/2}
 \lc_c \sum_Q
\big\| P_1[f\chi_{Q_*}] \big\|_2^{p/2}  \big\| P_2
[g\chi_{Q_*}]\big\|_2^{p/2}
\\&
\lc \sum_Q \big\| f\chi_{Q_*} \big\|_2^{p/2}  \big\|g\chi_{Q_*}
\big\|_2^{p/2} \lc \Big(\sum_Q \|f\chi_{Q_*}\|_2^p\Big)^{1/2}
\Big(\sum_Q \|g\chi_{Q_*}\|_2^p\Big)^{1/2}.
\end{align*}
By H\"older's inequality,
\begin{equation*}
\Big(\sum_Q \|f\chi_{Q_*}\|_2^p\Big)^{1/p} \lc \Big(\sum_Q
|Q_*|^{p/2-1} \|f\chi_{Q_*}\|_p^p\Big)^{1/p} \lc \la^{d(1/2-1/p)}
\|f\|_p,
\end{equation*}
and we have the same estimate for $g$. Thus $I^{2/p} \lc_c
\la^{d(1-2/p)} \|f\|_p \|g\|_p$ which is the desired bound for the
main term.

The corresponding estimates for $II$, $III$, $IV$ are
straightforward
 as we
use  \eqref{error} for the terms supported in $\bbR^d\setminus Q_*$.
We examine $II$ and begin with
\begin{align}
|II|&\le\sum_{Q\in \cQ(\la)}
\big\|S_1[f\chi_{Q_*}]\big\|_{L^{p}(Q\times[0,\la])}^{p/2} \big\|
S_2[g\chi_{\bbR^d\setminus Q_*}]
\big\|_{L^{p}(Q\times[0,\la])}^{p/2} \notag
\\
&\le\Big(\sum_{Q\in \cQ(\la)}
\big\|S_1[f\chi_{Q_*}]\big\|_{L^{p}(Q\times[0,\la])}^{p} \Big)^{1/2}
\Big(\sum_{Q\in \cQ(\la)}\big\| S_2[g\chi_{\bbR^d\setminus Q_*}]
\big\|_{L^{p}(Q\times[0,\la])}^{p} \Big)^{1/2}. \label{product}
\end{align}
We use the trivial  bound $\|S_1 f(\cdot, t)\|_p\lc (1+|t|)^d
\|f\|_p$ for $f$ replaced with $f\chi_{Q_*}$, so that the first
factor in \eqref{product} is bounded by $(C\la^{d+1}
\|f\|_p)^{p/2}$. By \eqref{error} we get
\begin{multline*}
\Big(\sum_{Q\in \cQ(\la)}\big\| S_2[g\chi_{\bbR^d\setminus Q_*}]
\big\|_{L^{p}(Q\times[0,\la])}^{p}  \Big)^{1/p}
\\ \lc\Big(\int_{-\la}^\la \int_{x\in \bbR^d}\Big[
\int_{|z|\ge \la} |z|^{-N} |g(x-z)| dz\Big]^p dx dt\Big)^{1/p} \lc
\la ^{d+1-N} \|g\|_p\,.
\end{multline*}
Hence $|II |^{2/p}\lc_c \la^{2(d+1)-N} \|f\|_p\|g\|_p.$ As $N\ge
10d$ this estimate is negligible. Because of  symmetry  $III$ is
estimated by the same term. For the estimation of $IV$ we proceed in
the same way but use \eqref{error} for both terms, the result is the
(again negligible)  bound $|IV |^{2/p}\lc  \la^{2(d+1-N)}
\|f\|_p\|g\|_p.$
 \end{proof}

We now formulate an analogous result  for functions with smaller
frequency support and smaller separation.

\begin{lemma}\label{smallballsii}
Let $p> 2+\frac{4}{d+1}$ and  $\la^{1/2}\ge 2^{j}\ge1.$ Let $Q_1$,
$Q_2\subset[-1,1]^d$ be cubes of side $2^{j}\la^{-1/2}$, so that
$\dist (Q_1, Q_2) \ge c  2^{j}\la^{-1/2}$ and let
$\phi\in\Phi_\ellip(\eps,N,A)$.
Then for all $f$ and $g$ such
that $\,\supp \widehat f\subset Q_1$, $\supp \widehat f\subset Q_2$,
\begin{equation*}
\big\|Sf\, Sg\big\|_{L^{p/2}(\bbR^d\times[0,\la])} \lc_c 2^{4j
(\frac d2-\frac{d+1}{p})} \la^{\frac 2p} \|f\|_{L^p(\R^d)}
\|g\|_{L^p(\R^d)}\,.
\end{equation*}
\end{lemma}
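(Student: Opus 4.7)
The plan is to deduce the estimate from Lemma \ref{1-separated} by parabolic rescaling. Set $\delta=2^{j}\la^{-1/2}$ and let $\xi_0$ denote the midpoint of the centers of $Q_1$ and $Q_2$. Substituting $\xi=\xi_0+\delta\eta$ in \eqref{Sdef} leads naturally to rescaled functions $\widehat{\tilde f}(\eta):=\widehat f(\xi_0+\delta\eta)$ (and analogously $\widehat{\tilde g}$), whose Fourier supports are unit cubes in the $\eta$-variable, still separated by a distance $\ge c$. Taylor expansion of $\phi$ around $\xi_0$ isolates the new phase
\begin{equation*}
\tilde\phi(\eta)\,=\,\delta^{-2}\bigl[\phi(\xi_0+\delta\eta)-\phi(\xi_0)-\delta\inn{\nabla\phi(\xi_0)}{\eta}\bigr],
\end{equation*}
whose Hessian at $\eta$ equals $\phi''(\xi_0+\delta\eta)$.

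Setting $y=\delta(x+t\nabla\phi(\xi_0))$ and $s=\delta^2 t$ absorbs the zeroth- and first-order Taylor terms into a unimodular prefactor and a $t$-dependent translation in $x$, giving the pointwise identity $|Sf(x,t)\,Sg(x,t)|=\delta^{2d}\,|\tilde S\tilde f(y,s)\,\tilde S\tilde g(y,s)|$, where $\tilde S$ is the operator from \eqref{Sdef} built with $\tilde\phi$ in place of $\phi$. Since $t\in[0,\la]$ corresponds to $s\in[0,2^{2j}]$ and $dy\,ds=\delta^{d+2}\,dx\,dt$, tracking the Jacobian yields
\begin{equation*}
\|Sf\cdot Sg\|_{L^{p/2}(\bbR^d\times[0,\la])}\,=\,\delta^{2d-(2d+4)/p}\,\|\tilde S\tilde f\cdot\tilde S\tilde g\|_{L^{p/2}(\bbR^d\times[0,2^{2j}])},
\end{equation*}
while an elementary change of variable on the input side gives $\|\tilde f\|_p=\delta^{-d(1-1/p)}\|f\|_p$ (and similarly for $\tilde g$). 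Invoking Lemma \ref{1-separated} with $\la$ replaced by $2^{2j}$ and combining the estimates, the total power of $\delta$ collapses to $-4/p$, and since $\delta^{-4/p}=2^{-4j/p}\la^{2/p}$ one arrives at $2^{4j(d/2-(d+1)/p)}\la^{2/p}\|f\|_p\|g\|_p$, as claimed.

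The main point that needs to be checked is that the rescaled phase $\tilde\phi$ actually belongs to $\Phi_{\ellip}(\eps,N,A)$ on the support of the rescaled cutoff. Since $\phi''$ is uniformly elliptic and smooth on the compact set $\supp\chi$, a preliminary linear change of coordinates in $\eta$ makes $\phi''(\xi_0)$ equal to the identity without affecting the separation of the rescaled Fourier supports nor the $L^p$ norms (up to admissible constants); the identity $\tilde\phi''(\eta)=\phi''(\xi_0+\delta\eta)$ is then uniformly close to the identity matrix once $\delta$ is smaller than some admissible threshold $\delta_0$. In the complementary regime $\delta\ge\delta_0$, i.e.\ $2^j\gtrsim\la^{1/2}$, no rescaling is needed: a direct application of Lemma \ref{1-separated} yields the bound $\la^{d(1-2/p)}\|f\|_p\|g\|_p$, and the condition $p>2+\tfrac{4}{d+1}$ forces the exponent $d-2(d+1)/p$ to be nonnegative, so this already dominates $2^{4j(d/2-(d+1)/p)}\la^{2/p}\|f\|_p\|g\|_p$ up to an admissible constant.
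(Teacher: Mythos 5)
Your argument is correct and is essentially the paper's own proof: the same parabolic rescaling $\xi=\xi_0+\delta\eta$ with $\delta=2^j\la^{-1/2}$, reducing to Lemma \ref{1-separated} on the rescaled time interval $[0,\la\delta^2]=[0,2^{2j}]$, with the same Jacobian bookkeeping producing the net factor $\delta^{-4/p}$ and hence the stated bound. The only deviation is your last paragraph: since the lemma already assumes $\phi\in\Phi_\ellip(\eps,N,A)$, the rescaled Hessian $\tilde\phi''(\eta)=\phi''(\xi_0+\delta\eta)$ has eigenvalues in $[1-\eps,1+\eps]$ for every $\delta\le1$, so the preliminary normalization of $\phi''(\xi_0)$, the threshold $\delta_0$, and the separate regime $\delta\ge\delta_0$ are superfluous (though your handling of that regime is itself correct).
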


\begin{proof}
 By finite partitions and the triangle inequality, we may suppose that $Q_1$ and $Q_2$ are balls of radius $2^{j}\la^{-1/2}$. We reduce matters to the
statement in Lemma  \ref{1-separated} by scaling. Let $\xi_0$ be the
midpoint of the interval connecting the center of the balls. We
change variables $\xi=\xi_0+ \delta\eta$ where $\delta=
2^{j}\la^{-1/2}$. Then a short computation shows
that
\begin{equation*}
S^\phi \!f(x,t) = e^{i(\inn{x}{\xi_0}+t\phi(\xi_0))} S^\psi \!f_*
(\delta (x+t\nabla \phi(\xi_0)), \delta^2 t)\quad \text{where }
f_*(y) = f(\delta^{-1}y)e^{i\delta^{-1}\inn{y}{\xi_0}},
\end{equation*}
and the phase $\psi$ is given by
$$\psi(\eta)= \frac{1}{2} \int_0^1
 \inn{\phi''(\xi_0+s\delta\eta) \eta}{\eta} ds.$$
The same consideration is applied to $S^\phi \!g$. Note that $\psi$
is elliptic (with estimates uniform in $\xi_0$ and $\delta$)
and  the frequency supports of $f_*$ and $g_*$ are now
separated, independently of $\delta$, $j$ and $\la$. Thus we can
apply Lemma~\ref{1-separated} to obtain
\begin{align*}
\|S^\phi \! f\,S^\phi\! g\|_{L^{p/2} (\bbR^d\times [0,\la])}&
=\delta^{-(d+2)/(p/2)} \|S^\psi \!f_*\,S^\psi \!g_*\|_{L^{p/2}
(\bbR^d\times [0,\la \delta^2])}
\\&\lc \delta^{-(2d+4)/p} (\la \delta^2)^{d(1-2/p)} \|f_*\|_p
\|g_*\|_p\\ &\lc \delta^{2d-4(d+1)/p} \la^{d(1-2/p)}\|f\|_p\|g\|_p.
\end{align*}
As $\delta=2^{j}\la^{-1/2}$ the assertion follows.
\end{proof}

We will also require the following lemma for when we have no
frequency separation.

\begin{lemma}\label{smallballsi}
Let $p\ge 1$, let $Q\subset[-1,1]^d$ be a cube of side $\la^{-1/2}$,
and let $\phi\in \Phi(N,A)$. Then for all $f$ such that $\,\supp
\widehat f\subset Q$,
\begin{equation*}
\|S f(\cdot, t)\|_{L^p(\R^d)} \lc \|f\|_{L^p(\R^d)}, \quad |t|\le
\la.
\end{equation*}
\end{lemma}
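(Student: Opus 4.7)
The plan is to express $Sf(\cdot,t)=K_t*f$ and show that the convolution kernel $K_t$ has $L^1$ norm bounded uniformly in $t$ for $|t|\le\lambda$; Young's inequality will then yield the desired $L^p$ estimate for all $p\ge 1$. Since $\widehat{f}$ is supported in $Q$, we may insert a smooth cutoff $\tilde\chi_Q$ equal to $1$ on $Q$ and supported in a slightly larger concentric cube, and thereby reduce matters to showing that the multiplier $m_t(\xi):=\chi(\xi)\tilde\chi_Q(\xi)e^{it\phi(\xi)}$ satisfies $\|\cF^{-1}[m_t]\|_{L^1(\R^d)}\lc 1$ uniformly.

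The main step is a rescaling to unit frequency scale. Let $\xi_0$ denote the center of $Q$, substitute $\xi=\xi_0+\lambda^{-1/2}\eta$, and define $g_t(\eta):=m_t(\xi_0+\lambda^{-1/2}\eta)$; a direct change of variables in the inverse Fourier transform shows that $\|\cF^{-1}[m_t]\|_{L^1}=\|\cF^{-1}[g_t]\|_{L^1}$, so it suffices to bound the latter. Taylor expansion of $\phi$ around $\xi_0$ gives
$$\phi(\xi_0+\lambda^{-1/2}\eta)=\phi(\xi_0)+\lambda^{-1/2}\langle\nabla\phi(\xi_0),\eta\rangle+\lambda^{-1}\psi(\eta),$$
where $\psi(\eta)=\int_0^1(1-s)\langle\phi''(\xi_0+s\lambda^{-1/2}\eta)\eta,\eta\rangle\,ds$. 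The constant and linear pieces in $\eta$ contribute only a factor of modulus one and a spatial translation of $\cF^{-1}[g_t]$, both of which preserve the $L^1$ norm.

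The decisive observation is that for $|t|\le\lambda$ the coefficient of the quadratic remainder satisfies $|t\lambda^{-1}|\le 1$. Because $\phi\in\Phi(N,A)$, the function $\psi$ has $C^{N-2}$ norm bounded by a constant depending only on $A$, and hence $g_t$ is a smooth function supported in a fixed compact set whose derivatives up to order $N-2$ are bounded uniformly in $t$ and $\lambda$. Integration by parts then yields $|\cF^{-1}[g_t](y)|\lc (1+|y|)^{-(N-2)}$, and since $N\ge 10d$ this kernel has $L^1$ norm bounded uniformly in $t$ and $\lambda$, as required.

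I do not anticipate a significant obstacle; this is a routine Young-inequality argument once the rescaling has been set up. The only subtle point is that the hypothesis $|t|\le\lambda$ is sharp for this proof, since allowing $|t|$ to exceed $\lambda$ would push $|t\lambda^{-1}|>1$, causing the higher derivatives of $e^{it\lambda^{-1}\psi(\eta)}$ to grow in $t$, which would destroy the uniform kernel bound.
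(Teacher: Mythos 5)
Your argument is correct and is essentially the paper's proof: the paper likewise reduces to showing that the multiplier localized to $Q$ is a uniform $L^p$ Fourier multiplier, rescales $\xi=\xi_B+\la^{-1/2}\eta$, removes the constant and linear parts of the phase by modulation/translation invariance, and uses that the remaining symbol $h(\eta,t)=\chi(\eta)e^{it(\phi(\la^{-1/2}\eta+\xi_B)-\phi(\xi_B)-\inn{\la^{-1/2}\eta}{\nabla\phi(\xi_B)})}$ has derivatives $O(1)$ for $|t|\le\la$, which is exactly your observation that the quadratic remainder carries the harmless factor $t\la^{-1}$ of size at most $1$. Your explicit $L^1$-kernel/Young step is just the standard way of cashing in the paper's "$\partial^\alpha_\eta h=O(1)$, as one can easily check," so the two arguments coincide.
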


\begin{proof}
Let $\xi_B$ be the center of the cube $Q$, and let $\chi\in \coi$ so
that $\chi(\xi)=1$ for $|\xi|\le \sqrt{d}$.
 It suffices to show that
$\chi(\la^{1/2}(\xi-\xi_B)) e^{it\phi(\xi)}$ is a Fourier multiplier
of $L^p$ for all $|t|\le \la$, with bounds uniform in $t$. By
modulation, translation and dilation invariance of the multiplier
norm it suffices to check that $h(\cdot,t)$ defined by
$$h(\eta,t)= \chi(\eta) e^{i t(\phi(\la^{-1/2}\eta+\xi_B)-\phi(\xi_B)
-\inn {\la^{-1/2}\eta}{\nabla\phi(\xi_B)})},$$ is a Fourier
multiplier of $L^p$,
 uniformly in
$|t|\le \la$. However this follows since $\partial_\eta^\alpha
h(\eta,t) =O(1)$ for $|t|\le \la$ as  one  can easily check.
\end{proof}

\begin{proof}[Proof of Proposition \ref{ellipticthm}]
By a partition of unity and a compactness argument  it suffices to
show that for every $\xi_0\in \cU$ there is  a
 neighborhood $\cU(\xi_0)$ so that the statement of the theorem holds
with  $\chi$ replaced by   $\chi_0\in \coi $  supported in
$\cU(\xi_0)$. Now let $\mathcal{H}$ be the (symmetric) positive
definite squareroot of $\phi''(\xi_0)$ and let
$$\psi(\eta)= \eps_1^{-2} \left(\phi(\xi_0+\eps_1
\mathcal{H}^{-1}\eta)-\phi(\xi_0)- \eps_1
\inn{\mathcal{H}^{-1}\eta}{\nabla \phi(\xi_0)}\right).$$ Then it
suffices to show that $S^\psi$ (defined with the amplitude
$\chi(\xi_0+\eps_1 \mathcal{H}^{-1}\eta)$) satisfies the asserted
estimates, with a dependence on $\eps_1$. If $\eps_1$ is chosen
sufficiently small then we have reduced matters to a phase function
in $\Phi_\ellip(\eps, N,A)$ with parameters for which Tao's theorem
and therefore Lemma \ref{smallballsii} applies.

We now return to our original notation and work with a phase
function $\phi$ but  assume  now that  $\phi\in
\Phi_\ellip(\eps,N,A)$; we may also assume that the amplitude
function $\chi$ is smooth and supported in $[-(2d)^{-10},
(2d)^{-10}]^{-d}$. We  make a decomposition  of the product $Sf
\,Sg$ in terms of bilinear operators, localizing the frequency
variables in terms of nearness to the diagonal in
$(\xi,\eta)$-space; this is similar to arguments in
 \cite{lee-BR}, \cite{se-bil} and \cite{tavave}.

Let $\chi_{0}$ be a radial $\coi(\bbR^d)$ function  so that
$\chi_0(\omega)=1$ for $|\omega|\le 8d^{1/2}$ and so that
 $\supp \chi_0$ is contained in $\{\omega:|\omega|<16d^{1/2}\}$. Fix $\la>1$ and set
\begin{align*}
\Theta_{0}(\xi,\eta)&= \chi_0(\la^{1/2} (\xi-\eta))
\\
\Theta_j(\xi,\eta)&= \chi_0(\la^{1/2}2^{-j} (\xi-\eta))-
\chi_0(2\la^{1/2}2^{-j}(\xi-\eta)), \quad j\ge 1,
\end{align*}
so that $\Theta_0$ is supported where $|\xi-\eta|\le 16d^{1/2}
\la^{-1/2} $ and, $\Theta_j$  is supported in the region
$$4d^{1/2}2^{j} \la^{-1/2}\le |\xi-\eta|\le 16d^{1/2}2^{j}\la^{-1/2}.$$ We
may then decompose
$$
Sf \,Sg\,=\, \sum_{j\ge 0} \cB_j[f,g]
$$
where
$$
\cB_{j}[f,g](x,t)= \frac{1}{(2\pi)^{2d}} \iint
e^{i\inn{x}{\xi+\eta}} e^{it (\phi(\xi)+\phi(\eta))}
\Theta_j(\xi,\eta) \widehat f(\xi) \widehat g(\eta)  d\xi d\eta
$$
Only values of $j\ge 0$ with $2^j \le \la^{1/2}$ will be relevant,
as otherwise $\cB_j$ is identically zero. We will prove the estimate
\begin{equation}\label{claimbj}
\big\|\cB_j[f,g]\big\|_{p/2} \lc
\begin{cases}
2^{4j(\frac{d}{2}-\frac{d+1}p)}\la^{\frac 2p}\|f\|_p\|g\|_p, \quad &
\frac{2(d+3)}{d+1}< p\le 4,
\\
 2^{j(d-\frac
{4}{p})}\la^{\frac d2-\frac{2(d-1)}{p}} \|f\|_p\|g\|_p, \quad &
\quad \ \ 4<p<\infty,
\end{cases}
\end{equation}
and use this to bound
$$
\|Sf\|_{L^p(\bbR^d\times[0,\la])}=
\|(Sf)^2\|_{L^{p/2}(\bbR^d\times[0,\la])}^{1/2} \le \Big(\sum_{0\le
j \le \log_2(\la^{1/2})} \|\cB_j[f,f]\|_{p/2} \Big)^{1/2},
$$
and then sum a geometric series.

In order to prove \eqref{claimbj}, we decompose $\cB_j$ into pieces
on which we may apply Lemma~\ref{smallballsii}. Let $\vth\in
\coi(\bbR^d)$ a function supported in $[-3/5,3/5]^d$, equal to 1 on
$[-2/5,2/5]^d$,
 and satisfying  $$\sum_{n\in \bbZ^d} \vth(\xi-n)=1$$ for all $\xi\in \bbR^d$.
For  $j\ge 0$, $n\in \bbZ^d$, define
$$\beta_{j,n}(\xi)=\vth (\la^{1/2} 2^{-j}\xi -n)$$
and, for $(n,n')\in \bbZ^d\times\bbZ^d$,
\begin{equation*}
\vth_{j,n,n'}(\xi,\eta)= \Theta_j(\xi,\eta)  \beta_{j,n}(\xi)
\beta_{j,n'}(\eta).
\end{equation*}
Observe that $\beta_{j,n},\beta_{j,n'}$ are supported in cubes
$Q_{j,n}$, $Q_{j,n'}$ which have sidelengths  slightly larger than
$\la^{-1/2} 2^{j}$, and that are centered at the points
$\xi_{j,n}=\la^{-1/2} 2^{j} n$ and $\xi_{j,n'}=\la^{-1/2} 2^{j} n'$,
respectively.

Now let
\begin{align*}
\Delta_0 &= \{(n,n')\in \bbZ^d\times\bbZ^d\,:\, |n-n'|\le
18d^{1/2}\,\},
\\
\Delta &= \{(n,n')\in \bbZ^d\times\bbZ^d\,:\, 2d^{1/2}\le |n-n'|\le
18d^{1/2}\,\}.
\end{align*}
Then if $\vth_{0,n,n'}$ is not identically zero then we necessarily
have $(n,n')\in \Delta_0$ and if, for $j\ge 1$ the function
 $\vth_{j,n,n'}$ is not identically zero then we necessarily have
$(n,n')\in \Delta$. These statements follow by the definitions of
our cutoff functions. Moreover,
$$\dist (Q_{j,n}, Q_{j,n'}) \le 18d^{1/2}2^j\la^{-1/2} \quad\text{ if } (n,n')\in
\Delta_0,
$$
and
$$2^{-1}d^{1/2}2^j\la^{-1/2}\le \dist (Q_{j,n}, Q_{j,n'}) \le 18d^{1/2}2^j\la^{-1/2}\quad \text{ if } j\ge 1 \text{ and }(n,n')\in
\Delta.
$$

For the application of  Lemma \ref{smallballsii} it is convenient to
eliminate the cutoff $\Theta_j$ but still keep the separation of the
supports of $\beta_{j,n}$ and $\beta_{j,n'}$. Set, for $j\ge 1$,
$$
\widetilde \cB_{j}[f,g](x,t)= \frac{1}{(2\pi)^{2d}} \iint
e^{i\inn{x}{\xi+\eta}} e^{it (\phi(\xi)+\phi(\eta))} \sum_{n,n'\in
\Delta} \beta_{j,n}(\xi)\beta_{j,n'}(\eta) \widehat f(\xi) \widehat
g(\eta)  d\xi d\eta
$$
and define $\widetilde \cB_{0}[f,g]$ similarly by letting the
$(n,n')$ sum run over $\Delta_0$. The reduction of the estimate for
$\cB_j$ to the estimate for $\widetilde \cB_j$ is straightforward;
by an averaging argument. Indeed, let $\chi_1=
\chi_0-\chi_0(2\,\cdot\,)$ and use  the Fourier inversion formula
$$\Theta_{j} (\xi,\eta)
= \frac{1}{(2\pi)^{d}}\int \widehat\chi_1(y)
e^{i\la^{1/2}2^{-j}\inn{\xi-\eta}{y}} dy, \qquad j\ge 1;
$$
then
$$
\cB_j[f,g]= \frac{1}{(2\pi)^{d}} \int \widehat \chi_1(y)\widetilde
\cB_j[f_{-y}, g_{y}] dy
$$
where $f_{-y}(x)= f(x+\la^{1/2}2^{-j}y)$ and
$g_{y}(x)=g(x-\la^{1/2}2^{-j}y)$. A similar formula holds for $j=0$,
only then $\chi_1$ is replaced with $\chi_0$. Thus in order to
finish the argument it is enough to show that $ \|\widetilde \cB_j[
f,g]\|_{p/2}$ is dominated by the right hand side of
\eqref{claimbj}.

Define convolution operators $P_{j,n}$ by $\widehat {P_{j,n}  f}
=\beta_{j,n}\widehat f$. Note that for fixed $j$, each $\xi$ is
contained in only a bounded number of the sets $Q_{j,n}+Q_{j,n'}$.
This implies, by interpolation of $\ell^2(L^2) $ with trivial
$\ell^1(L^1)$ or $\ell^\infty(L^\infty)$ bounds that, for $j\ge 1$,
$p\ge 2$,
\begin{multline}\label{nn'orth}
\big\|\widetilde {\cB}_j[f,g]\big\|_{L^{p/2}(\bbR^d\times[0,\la])}
\\ \lc \max\{ 1, (\la^{1/2} 2^{-j})^{d(1-4/p)}\} \,
\Big(\sum_{n,n'\in \Delta} \big\|SP_{j,n} f\, SP_{j,n'} g
\big\|_{L^{p/2}(\bbR^d\times[0,\la])}^{p/2}\Big)^{2/p}.
\end{multline}
The
 analogous formula for $j=0$ holds if we replace $\Delta$ by $\Delta_0$.
Notice that for all $j$,
\begin{equation}\label{pjnest}
\Big(\sum_n\|P_{j,n} f\|_p^p\Big)^{1/p}
 \lc \|f\|_p, \quad\text{$p\ge 2$}.
\end{equation}

Now if $j=0$ we use Lemma \ref{smallballsi} to estimate
\begin{align*}
\|SP_{0,n} f(\cdot,t)\, SP_{0,n'} g(\cdot,t)\big\|_{L^{p/2}(\bbR^d)}
&\lc \|SP_{0,n} f(\cdot,t)\|_p\| SP_{0,n'} g(\cdot,t)\|_p\\
&\lc\|P_{0,n}f\|_p \|P_{0,n'}g\|_p;
\end{align*}
hence,  after integrating in $t$,
\begin{align*}
\big\|\widetilde
{\cB}_0[f,g]\big\|_{L^{p/2}(\bbR^d\times[0,\la])}\lc \max\{ 1,
\la^{d(1/2-2/p)}\} \la^{2/p} \Big( \sum_{n,n'\in \Delta_0}\|P_{0,n}
f\|_p^{p/2} \|P_{0,n'} g\|_p^{p/2}\Big)^{2/p}&
\\
\lc \max\{ 1, \la^{d(1/2-2/p)}\} \la^{2/p} \Big(\sum_n\|P_{0,n}
f\|_p^p\Big)^{1/p} \Big(\sum_{n'}\|P_{0,n'} g\|_p^p\Big)^{1/p}.&
\end{align*}
 The asserted bound for $j=0$ follows
from \eqref{pjnest}.

Next for $j>0$ we use Lemma \ref{smallballsii}, and thus the
assumption $p>2+\frac{4}{d+1}$,  and estimate
$$\big\|SP_{j,n} f\, SP_{j,n'} g\big\|_{L^{p/2}(\bbR^d\times[0,\la])}
\lc 2^{4j(\frac d2-\frac{d+1}p)} \la^{2/p}
 \|P_{j,n} f\|_p \|P_{j,n'} g\|_p.
$$
Therefore by \eqref{nn'orth}
\begin{multline*}
\Big\|\widetilde {\cB}_j[f,g]\Big\|_{L^{p/2}(\bbR^d\times[0,\la])} \\
\lc \max\{ 1, (\la^{1/2} 2^{-j})^{d(1-4/p)}\} 2^{4j(\frac
d2-\frac{d+1}p)} \la^{2/p} \Big(\sum_n\|P_{j,n} f\|_p^p\Big)^{1/p}
\Big(\sum_{n'}\|P_{j,n'} g\|_p^p\Big)^{1/p}
\end{multline*}
and  again the asserted bound for $\|\widetilde \cB_j[f,g]\|_{p/2}$
follows
 from \eqref{pjnest}.
\end{proof}

\section{Estimates for
$\exp(it (-\Delta)^{\alpha/2})$ }\label{localtoglobal}

We now prove the endpoint estimates of Theorems \ref{pop1} and
\ref{the}. First we remark that by various scaling and symmetry
arguments we may assume that $I=[0,1]$.

Consider $\chi_0,\chi\in \coi(\bbR)$ supported in $(-2,2)$ and
$(1/2,2)$, respectively, such that
$$
\chi_0+\sum_{k\ge1}\chi(2^{-k}\,\cdot\,)=1.
$$
We define the operators $T_k^\alpha\equiv T_k$ by
$$
\widehat{T_{0} f(\,\cdot\,,t )}(\xi) =\chi_0(|\xi|)
e^{it|\xi|^\alpha}\widehat f(\xi),
$$
$$\widehat{T_{k} f(\,\cdot\,,t )}(\xi) =\chi(2^{-k}|\xi|) e^{it|\xi|^\alpha}\widehat
f(\xi),\quad k\ge 1,
$$
so that $U^\alpha_t=\sum_{k\ge0}T_k(\cdot,t)$.

Our main result is the following  inequality for vector-valued functions $\{f_k\}_{k=0}^\infty \in \ell^p(L^p)$.
\begin{theorem} \label{trliz}
Let $p\in(2+\frac{4}{d+1},\infty)$, $\alpha\neq1$, $d=1$ or
$\alpha>1$, $d\ge2$ and $\beta =\alpha d (\frac{1}{2}-
\frac{1}{p})-\frac{\alpha}{p}$. Then
\begin{equation}\label{trlizest}
\Big\|\, \sum_{k\ge0}  \Big(\int_0^1|2^{-k\beta} T_{k} f_{k}(\cdot,
t)|^pdt\Big)^{1/p}
 \,\Big\|_{L^p(\R^d)} \lc
\Big(\sum_{k\ge0}
\|f_{k}\|_{p}^p\Big)^{1/p}.
\end{equation}
\end{theorem}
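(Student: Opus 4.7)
The plan is to reduce each dyadic frequency piece $T_k$ to Proposition~\ref{ellipticthm} by rescaling, and then pass from the resulting single-scale estimates to the $\ell^p_k$ summation on the right of \eqref{trlizest} via a Fefferman--Stein sharp function argument (a variant of the technique of~\cite{se}).

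For each $k\ge 1$ the substitution $\xi=2^k\eta$, $y=2^kx$, $s=2^{k\alpha}t$ converts $T_k f_k(x,t)$ into $S g_k(y,s)$, where $g_k(y)=f_k(y/2^k)$ and $S=S^{\phi}_{\chi}$ is the operator of \eqref{Sdef} with phase $\phi(\eta)=|\eta|^\alpha$ on the annulus $\{1/2<|\eta|<2\}$. The hypothesis that $\alpha\neq 1$ (for $d=1$) or $\alpha>1$ (for $d\ge 2$) makes $\phi$ elliptic on this annulus, up to a sign change $t\to -t$ when $d=1$ and $0<\alpha<1$. Proposition~\ref{ellipticthm} with $\lambda=2^{k\alpha}$, combined with $\|g_k\|_p=2^{kd/p}\|f_k\|_p$, then yields $\|T_k f_k\|_{L^p(\R^d\times[0,1])}\lc 2^{k\beta}\|f_k\|_p$, so that $F_k(x):=2^{-k\beta}\bigl(\int_0^1|T_k f_k(x,t)|^p\,dt\bigr)^{1/p}$ satisfies $\|F_k\|_p\lc\|f_k\|_p$; the case $k=0$ is immediate since the associated Fourier multiplier is smooth with compact support.

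The triangle inequality gives only $\|\sum_k F_k\|_p\lc\sum_k\|f_k\|_p$, an $\ell^1_k$ sum rather than the desired $\ell^p_k$ norm, so orthogonality across the dyadic scales $2^k$ must be harvested. Setting $G:=\sum_k F_k$ and working in a dense class where $\|G\|_p$ is a priori finite, the Fefferman--Stein theorem $\|G\|_p\lc\|G^\#\|_p$ reduces matters to the pointwise bound
\begin{equation*}
G^\#(x)\lc\Big(\sum_{k\ge 0}\bigl[M(|f_k|^r)(x)\bigr]^{p/r}\Big)^{1/p}
\end{equation*}
for some $1<r<p$. Granted this, scalar $L^{p/r}$-boundedness of the Hardy--Littlewood maximal function (legal since $p/r>1$), applied termwise, yields
\begin{equation*}
\|G\|_p^p\lc\sum_k\|M(|f_k|^r)\|_{L^{p/r}}^{p/r}\lc\sum_k\||f_k|^r\|_{L^{p/r}}^{p/r}=\sum_k\|f_k\|_p^p,
\end{equation*}
which is \eqref{trlizest}. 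The sharp-function bound itself is produced for a cube $Q$ of side $R$ by splitting the $k$-sum at the critical scale $2^kR\approx 1$: for $2^{-k}<R$, a localized rescaled version of Proposition~\ref{ellipticthm} on a fixed dilate of $Q$ gives a contribution controlled by $M(|f_k|^r)^{1/r}$; for $2^{-k}\ge R$, Bernstein's inequality shows that $T_k f_k(\cdot,t)$ varies slowly on $Q$, producing an oscillation gain of order $(2^kR)^{\eta}$ that makes the sum over these $k$ geometrically convergent.

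The main obstacle is precisely this sharp-function estimate. The delicate point is producing a genuinely \emph{local} form of Proposition~\ref{ellipticthm} on $Q$, with maximal averages in place of global $L^p$ norms on the right-hand side; this forces one to revisit the bilinear decomposition of Section~\ref{ell} with spatial cutoffs adapted to $Q$, and to choose the intermediate exponent $r<p$ small enough that the localization closes while keeping $p/r>1$ so that the Hardy--Littlewood maximal function is bounded on $L^{p/r}$.
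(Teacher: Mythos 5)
Your first step coincides with the paper's: rescaling $T_k$ to the operator $S$ with phase $|\xi|^\alpha$ on the annulus and invoking Proposition~\ref{ellipticthm} with $\la\approx 2^{\alpha k}$ gives exactly the single-scale bound \eqref{freq}, and passing from that to the $\ell^p_k$ statement via the Fefferman--Stein inequality is also the paper's strategy. But the heart of the matter is the sharp-function estimate itself, and there your proposal has a genuine gap: you split the $k$-sum only at the scale $2^k\ell(Q)\approx 1$ and claim that for $2^{-k}<\ell(Q)$ a ``localized rescaled version of Proposition~\ref{ellipticthm} on a fixed dilate of $Q$'' gives a contribution controlled by $M(|f_k|^r)(x)^{1/r}$. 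This localization fails in the regime $2^{-k}<\ell(Q)\ll 2^{k(\alpha-1)}$: over the time interval $[0,1]$ the kernel of $T_k$ disperses to distance $\approx 2^{k(\alpha-1)}$ (it only decays rapidly outside the ball $\fB_k(\alpha)$, cf.~\eqref{smallness}), so on a cube of side $\ell(Q)\ll 2^{k(\alpha-1)}$ the values of $T_kf_k$ are dominated by the part of $f_k$ living far outside any fixed dilate of $Q$; those ``tails'' are the main term, not an error. Your own closing paragraph concedes that this local estimate is the unresolved ``main obstacle,'' so the proof is not complete, and the specific route you indicate (spatial cutoffs at scale $\ell(Q)$ in the bilinear argument of \S\ref{ell}) is aimed at the wrong scale.

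The paper resolves precisely this point by using two critical scales rather than one: it splits into the regimes $2^k\ell(Q)\le 1$ (inequality \eqref{sing}, handled by the gradient bound on the Littlewood--Paley kernel, essentially your Bernstein step), $\ell(Q)>2^{k(\alpha-1)}$ (inequality \eqref{dispk1}, where localization to a dilate $Q^*$ \emph{is} legitimate thanks to \eqref{smallness}, and the bound follows by interpolating an $L^\infty$ and an $L^r$ estimate involving the Hardy--Littlewood maximal operator), and the intermediate range $1<2^k\ell(Q)\le 2^{\alpha k}$ (inequality \eqref{disp}), which is the crux. There the initial data $f_k$ is decomposed over cubes $R$ of side $2^{k(\alpha-1)}$ (the propagation scale, not $\ell(Q)$); the off-diagonal pieces are negligible by \eqref{smallness}, while the diagonal pieces are treated with the time-Sobolev inequality \eqref{sob} (trading $L^p_t$ for $L^r_t$ at cost $2^{k\alpha(1/r-1/p)}$), the $L^r$ version of \eqref{freq} with $2+\frac{4}{d+1}<r<p$, H\"older on the cubes $R$, and the gain $2^{-jd(1/r-1/p)}$ from the $L^r\to L^p$ mapping of the averaging operator $\zeta_j*$; the net factor $2^{-nd(1/r-1/p)}$, $n=\log_2(2^k\ell(Q))$, makes the sum converge. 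None of this mechanism appears in your outline, and without it (or a correct substitute, in particular a maximal/local estimate formulated at the scale $2^{k(\alpha-1)}$ rather than $\ell(Q)$) the claimed pointwise bound $G^\#(x)\lc\big(\sum_k[M(|f_k|^r)(x)]^{p/r}\big)^{1/p}$ is unsubstantiated.
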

The proof will be given in  \S\ref{mainproof}.
We now  discuss the implications to Theorem \ref{pop1} and~\ref{the}, in
fact strengthened versions involving Triebel-Lizorkin spaces
$F^{p}_{\alpha,q}$ and Besov spaces $B^p_{\alpha,q}$.
Here the norms on these spaces are given by the $L^p(\ell^q)$
and $\ell^q(L^p)$ norms (resp.) of the sequence $\{ 2^{k\alpha}
L_k f\}_{k=0}^\infty$,
with the usual inhomogeneous dyadic frequency composition
$I=\sum_{k\ge 0} L_k$.
See \cite{triebel}. The
following corollary is an immediate consequence of Theorem \ref{trliz}, by
Minkowski's
inequality and  Fubini's theorem.

\begin{corollary} \label{smcor}
Let $p$, $\alpha$, $\beta$  be as in Theorem  \ref{trliz}.
Then
\begin{equation*}\label{smcorest}
\Big(\int_0^1 \big \|U^\alpha_{t} f \big\|_{F^p_{0,1}(\bbR^d)}^p
dt\Big)^{1/p}
\lc \|f\|_{B^p_{\beta,p}(\bbR^d)}.
\end{equation*}
\end{corollary}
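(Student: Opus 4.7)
The plan is to reduce the corollary directly to Theorem~\ref{trliz} by matching the dyadic Littlewood-Paley pieces with the operators $T_k$, then using Fubini's theorem to interchange the $L^p_t$ and $L^p_x$ integrals and Minkowski's inequality to pass the $\ell^1$-sum from the $F^p_{0,1}$-norm inside the $L^p$-norm in~$t$. Since Besov and Triebel-Lizorkin norms do not depend (up to equivalence) on the particular smooth dyadic partition used, I am free to choose a partition adapted to the multipliers defining $T_k$.

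First, I would fix the Littlewood-Paley projections $L_k$ (for the definition of both $F^p_{0,1}$ and $B^p_{\beta,p}$) by taking their Fourier multipliers to be $\chi(2^{-k}|\xi|)$ for $k\ge1$ and $\chi_0(|\xi|)$ for $k=0$, with $\chi_0,\chi$ the cutoffs used in \S\ref{localtoglobal}. With this choice one has the exact identity $L_k U^\alpha_t f = T_k f$. Introducing a fattened cutoff $\widetilde\chi\in\coi$ supported in $(1/2,2)$ and identically one on $\supp\chi$, and the corresponding projection $\widetilde L_k$, the support condition yields $T_k f = T_k(\widetilde L_k f)$. Then Fubini's theorem followed by Minkowski's inequality in $t$ gives
$$
\int_0^1\|U^\alpha_t f\|_{F^p_{0,1}(\R^d)}^p\,dt
= \int_{\R^d}\Big\|\sum_{k\ge 0}|T_k\widetilde L_kf(x,\cdot)|\Big\|_{L^p([0,1])}^p dx
\le \Big\|\sum_{k\ge 0}\Big(\int_0^1|T_k\widetilde L_k f|^p\,dt\Big)^{1/p}\Big\|_{L^p(\R^d)}^p.
$$

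Applying Theorem~\ref{trliz} with $f_k = 2^{k\beta}\widetilde L_k f$ bounds the $p$-th root of the right-hand side by $\bigl(\sum_{k\ge 0}2^{k\beta p}\|\widetilde L_k f\|_p^p\bigr)^{1/p}$. Since $\widetilde L_k f$ has Fourier support in an annulus of scale~$2^k$, the standard Littlewood-Paley equivalence of Besov norms gives $\|\widetilde L_k f\|_p\lc\sum_{|j-k|\le 1}\|L_j f\|_p$, and a harmless reindexing yields the desired bound by $\|f\|_{B^p_{\beta,p}(\R^d)}$. I do not expect any obstacle here: the argument is exactly the one the authors allude to, namely a pointwise use of Minkowski's inequality to trade the $\ell^1(L^p_t)$-structure hidden in $\int\|U^\alpha_t f\|_{F^p_{0,1}}^p$ for the $L^p_x$--$\ell^p$ structure that Theorem~\ref{trliz} already provides.
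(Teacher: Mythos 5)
Your proposal is correct and is essentially the paper's own argument: the authors dispose of the corollary in one line as "an immediate consequence of Theorem \ref{trliz}, by Minkowski's inequality and Fubini's theorem," and your write-up simply fills in the routine details (choosing the Littlewood--Paley partition $\chi_0,\chi(2^{-k}\cdot)$ so that $L_kU^\alpha_t f=T_kf$, Fubini plus the triangle inequality in $L^p_t$, then Theorem \ref{trliz} with $f_k=2^{k\beta}\widetilde L_k f$ and the standard equivalence of Besov norms under fattened projections).
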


This implies Theorem \ref{the} since
for $p\ge 2$  the space
$B^p_{\beta,p}\equiv F^p_{\beta,p}$ contains the Sobolev space $L^p_\beta\equiv F^{p}_{\beta,2}$, via the embedding $\ell^2\hookrightarrow\ell^p$ followed by the Littlewood-Paley inequality, and by the same reasoning $F^p_{0,1}$        is imbedded in $L^p\equiv F^{p}_{0,2}$.
We remark that a similar sharp  inequality for the wave equation
is proved in \cite{nase},
in sufficiently high dimensions.

Another consequence of Theorem \ref{trliz} is
\begin{corollary} \label{maxcor}
Let $p$, $\alpha$,  be as in Theorem  \ref{trliz}.
Let $t\mapsto \vartheta(t) $ be   smooth  and  compactly supported. Then
\begin{equation}\label{maxcorest}
\Big\| \, \big\| \vartheta(\cdot) U^\alpha_{(\cdot)} g
\big\|_{B^p_{1/p,1}(\bbR)} \Big\|_{L^p(\bbR^d)}
\lc \|g\|_{B^p_{\gamma,p}(\bbR^d)}, \quad\gamma= \alpha d(1/2-1/p).
\end{equation}
\end{corollary}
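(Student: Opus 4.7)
The plan is to apply Theorem~\ref{trliz} after a Littlewood--Paley decomposition in the spatial frequency, and then to promote the time $L^p$ norm appearing on the left side of \eqref{trlizest} to the Besov norm $B^p_{1/p,1}(\bbR)$ by exploiting time-frequency localization of each piece $T_k L_k g$.

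First, fix a Littlewood--Paley decomposition $\{L_k\}_{k\ge 0}$ on $\bbR^d$ chosen compatibly with the cutoffs $\chi_0,\chi$, so that $T_k L_k g = T_k L_k g$ and $g=\sum_{k\ge 0} L_k g$. Since $B^p_{1/p,1}(\bbR)$ is a Banach space, the triangle inequality yields, pointwise in~$x$,
$$\big\|\vartheta(\cdot) U^\alpha_{(\cdot)} g(x,\cdot)\big\|_{B^p_{1/p,1}(\bbR)} \le \sum_{k\ge 0}\big\|\vartheta(\cdot)\, T_k L_k g(x,\cdot)\big\|_{B^p_{1/p,1}(\bbR)}.$$
Taking $L^p(\bbR^d)$ norms interchanges the sum and the $L^p_x$-norm with a loss of nothing (the right side is the $\ell^1_k L^p_x$ norm), which is exactly the form bounded by Theorem~\ref{trliz}.

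Second, I would establish the pointwise-in-$x$ time-localization estimate
\begin{equation}\label{locbes-plan}
\big\|\vartheta(\cdot)\, T_k L_k g(x,\cdot)\big\|_{B^p_{1/p,1}(\bbR)} \lc 2^{k\alpha/p}\, \big\|T_k L_k g(x,\cdot)\big\|_{L^p_t(I)}
\end{equation}
for some compact interval $I\supset \supp\vartheta$. The point is that, for fixed $x$, the time-Fourier transform of $T_k L_k g(x,\cdot)$ is a measure supported on $\{\tau=|\xi|^\alpha:|\xi|\sim 2^k\}\subset\{|\tau|\sim 2^{k\alpha}\}$ for $k\ge 1$ (and in a bounded set for $k=0$). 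Multiplying by $\vartheta$ convolves the $\tau$-variable with the Schwartz function $\widehat\vartheta$, so the dyadic time projections satisfy
$$\big\|L^t_j[\vartheta\, T_k L_k g](x,\cdot)\big\|_{L^p_t} \lc_N (1+|j-k\alpha|)^{-N}\,\big\|\vartheta(\cdot)\, T_k L_k g(x,\cdot)\big\|_{L^p_t};$$
the base bound $\|L^t_j H\|_p\lc \|H\|_p$ comes from $L^p$-boundedness of Littlewood--Paley projections, and the decay in the tails is obtained by exploiting the mismatch between the frequency of $L^t_j$ and the concentration $|\tau|\sim 2^{k\alpha}$ of $\widehat{F_k}(x,\cdot)$. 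Summing against $2^{j/p}$ and controlling the trivial interval of integration yields \eqref{locbes-plan}.

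Finally, I substitute $f_k := 2^{k\gamma} L_k g$ into Theorem~\ref{trliz}. Since $\gamma-\beta = \alpha/p$, the theorem gives
$$\Big\|\sum_{k\ge 0} 2^{k\alpha/p}\, \|T_k L_k g(x,\cdot)\|_{L^p_t(I)}\Big\|_{L^p(\bbR^d)} \lc \Big(\sum_{k\ge 0} 2^{k\gamma p}\|L_k g\|_p^p\Big)^{1/p} = \|g\|_{B^p_{\gamma,p}(\bbR^d)},$$
where we have used translation/dilation symmetry to replace $[0,1]$ in \eqref{trlizest} by $I$. Combining this with \eqref{locbes-plan} proves \eqref{maxcorest}. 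The main obstacle is \eqref{locbes-plan}: it is the mechanism that converts an $L^p_t$-smoothing estimate into a full Besov-in-time statement, and thus implicitly controls the maximal function and yields a.e.\ convergence. The argument is fairly standard time-frequency localization, but the Schwartz-decay tails from $\vartheta$ must be handled carefully so that the geometric summation in $j$ costs only a constant.
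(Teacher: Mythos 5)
Your proposal follows essentially the same route as the paper: the paper also introduces dyadic frequency cutoffs $\cL_j$ in $t$, shows via integration by parts in the $s$-integral (equivalently, Schwartz decay of $\widehat\vartheta$ away from $|\xi|^\alpha$) that $\cL_j[\vartheta T_k g]$ is negligible unless $2^j\approx 2^{\alpha k}$, so the Besov weight $2^{j/p}$ becomes $2^{\alpha k/p}$, and then applies Theorem \ref{trliz} with $f_k$ a $2^{k\gamma}$-weighted frequency piece of $g$; your pointwise-in-$x$ formulation of the time-localization step is only a cosmetic variant. One correction: the tail bound you state, $\lc_N(1+|j-k\alpha|)^{-N}$, is too weak, since $\sum_{j>\alpha k}2^{j/p}(1+|j-\alpha k|)^{-N}$ diverges; the frequency mismatch actually puts the supports at distance $\gc\max(2^j,2^{\alpha k})$, yielding the exponential bound $\lc_N\min\{2^{-jN},2^{-\alpha kN}\}$ (as in the paper), which is what makes the $j$-summation against $2^{j/p}$ cost only a constant. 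Also, in the reduction you should keep the $k$-sum inside the $L^p_x$-norm (as your final display in fact does), since interchanging to $\ell^1_k(L^p_x)$ would only yield the larger norm $\|g\|_{B^p_{\gamma,1}}$ on the right.
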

 Theorem  \ref{pop1} is an immediate consequence of Corollary
 \ref{maxcor} since   the Besov space $B^p_{1/p,1}(\bbR)$ is continuously
embedded  in the space $C^0$ of continuous bounded functions
which vanish at infinity.

To see how Corollary \ref{maxcor}  follows from Theorem \ref{trliz}
 we introduce dyadic frequency cutoffs in the $t$ variable. We
 decompose  the identity as $I=\sum_{j=0}\cL_j$ where
$\widehat {\cL_j f} (\tau)= \widetilde \chi_j (\tau)\widehat f(\tau)$ where
$\widetilde \chi_j =\widetilde \chi (2^{-j}|\cdot|)$ for $j\ge 1$,
 with a suitable $\widetilde \chi\in C^\infty_0$ supported in
 $(1/2,2)$ and $\widetilde \chi_0$ is smooth and vanishes for
 $|\tau|\ge 2$. Now we apply  $L_j$ to
$\vartheta T_k g$. If  $2^{j-\alpha k} \notin (2^{-10}, 2^{10})$,
then we  apply an integration by parts in $s$  to terms
of the form $$\iint \chi(2^{-j}|\tau|) \chi(2^{-k}|\xi|) \widehat
 g(\xi) e^{i(\inn{x}{\xi}+t\tau)} \int \vartheta(s) e^{is
 (|\xi|^\alpha-\tau)} ds \, d\xi d\tau.$$ One finds that for this range
the contribution of $\cL_j [\vartheta T_k g]$
is negligible;
namely
 $$\Big(\int_{\R} \int_{\bbR^d}  | \cL_j [\vartheta T_k g ] (x,s)|^p
dx ds\Big)^{1/p} \lc C_N \min \{2^{-\alpha kN}, 2^{-jN}\} \|g\|_p \, \text{ if }
2^{j-\alpha k} \notin (2^{-10}, 2^{10}).
$$
Thus a localization in $\xi$ where $|\xi|\approx 2^k$ corresponds to a localization in $\tau$ where $|\tau |\approx 2^{k\alpha}$.
We combine this with Theorem \ref{trliz} applied to
$f_k= 2^{k\beta+ k/p} \cF^{-1} [\chi (2^{-k} |\cdot|) \widehat g]$
and obtain
\begin{equation*}
\Big\| \sum_{j\ge 0} 2^{j/p} \big \|\cL_j [\vartheta
  U^\alpha_{(\cdot)} g]\|_{L^p(\bbR), dt} \Big\|_{L^p(\bbR^d)}
\lc \Big(\sum_{k\ge 0} 2^{k \gamma p}\big\|\cF^{-1}[ \chi_k \widehat
  g]\big\|_{L^p(\bbR^d)}^p\Big)^{1/p}
\end{equation*}
which is \eqref{maxcorest}.





\section{Proof of Theorem \ref{trliz}}
\label{mainproof}
The localization of the multiplier near the origin $T_0$ is easily
handled as
$$\|\cF^{-1}[\chi_0(|\cdot|) e^{it|\cdot|^\alpha}]\|_{L^1}\le C$$
uniformly for $t\in [0,1]$. To see this, since
$\cF^{-1}[\chi_0(|\cdot|)]\in L^1$, it suffices to show that for
$\phi$ supported in $(1/2,2)$, the $L^1$ norm of
 $ \cF^{-1}[\chi_0 (e^{it|\cdot|^\alpha} -1)\phi(2^k|\cdot|)]$ is
$O(2^{-\alpha k})$ for $k\ge 0$. But by scaling this follows from
showing that the $L^1$ norm of $ \cF^{-1}[\chi_0(2^{-k}\cdot)
(e^{it2^{-\alpha k}|\cdot|^\alpha} -1)\phi(|\cdot|)]$ is
$O(2^{-\alpha k})$ which follows from the standard Bernstein
criterion.

Now, by scaling and  Proposition \ref{ellipticthm} with $\la\approx
2^{\alpha k}$, $\cU=\{\xi:1/2<|\xi|<2\}$ and
$\phi(\xi)=|\xi|^\alpha$,
 we have already proven the estimates
\begin{equation}\label{freq}
\|T_{k} f\|_{L^{p}(\bbR^d\times[0,1])}\lc
2^{k\beta}\|f\|_{L^{p}(\bbR^d)},\quad \beta\ge \beta(p):=\alpha d
\Big(\frac 12- \frac 1p\Big)-\frac\alpha p
\end{equation}
for $k>0$ and $p>2+\frac{4}{d+1}$.

 It suffices thus to show  that if
(\ref{freq}) holds for all $k>0$ and all $p>q$, then
(\ref{trlizest}) holds for all $p\in(q,\infty)$. Due to our
 restriction on \eqref{freq} we let $q=2+\frac{4}{d+1}$ and fix
 $2+\frac{4}{d+1}<r<p$.
We can make the
additional assumption that the $k$ sum on the left hand side is
extended over a finite set (with the constant in the inequality
independent of this assumption);  the general case then follows by
the monotone convergence theorem.

For later reference we state a
Sobolev inequality which is proved linking frequency
decompositions in $\xi$ and $\tau$   and Young's inequality
(just as in the argument used in \S\ref{localtoglobal}  to deduce
 Corollary \ref{maxcor} from Theorem \ref{trliz}). Namely
 \begin{equation}\label{sob}
\big\|\|T_{k}f\|_{L_t^p[0,1]}\big\|_{L_x^r}\lc 2^{\alpha k(\frac1r-\frac1p)}
\big\|\|T_{k}f\|_{L_t^r[0,1]}\big\|_{L_x^r}.
\end{equation}
holds
for $r\le p\le \infty$ (including the endpoint).
Alternatively one can also
apply the fundamental theorem
of calculus to $|T_{k}f(x,\cdot)|^r$ (see e.g. \cite{st}) to get
\eqref{sob} for $p=\infty$ and the general inequality follows by
convexity.

The main ingredient in the proof of  \eqref{trlizest} (besides \eqref{freq})
  will be the Fefferman-Stein sharp function
\cite{fest} and their inequality $$\|F\|_p\lc \|F^\#\|_p,$$ where
$p\in(1,\infty)$ and \textit{a priori} $F\in L^p$.
We apply this to $\sum_{k>0} 2^{-k \beta(p)}\|T_{k}
f_{k}(x,\cdot)\|_{L^p_t[0,1]}   $ and by
\eqref{freq} this function  is \textit{a priori} in $L^p$ as the sum in $k$
is assumed to be finite.
Thus  it
will suffice to prove that
$$
\Big\|\sup_{x\in Q} \intslash_Q \Big| \sum_{k>0}2^{-k \beta(p)} \|T_{k}
f_{k}(y,\cdot)\|_{L^p_t[0,1]}
- \intslash_Q \sum_{k>0}2^{-k \beta(p)}
 \|T_{k}
f_{k}(z,\cdot)\|_{L^p_t[0,1]}\,dz\Big| dy \Big\|_{L^p_x} \\
$$
is dominated by $C(\sum_{k>0}\|f_{k}\|_p^p)^{1/p}$. Here
 the supremum is taken over all  cubes containing $x$, and the
slashed integral
 denotes the average
$|Q|^{-1}\int_Q$.
By the triangle inequality the previous bound
follows from
\begin{equation*}\label{sharp}
\Big\|\sup_{x\in Q} \intslash_Q \sum_{k>0}
\intslash_Q2^{-k\beta(p)}\|T_{k} f_{k}(y,\cdot)  -  T_{k}
f_{k}(z,\cdot)\|_{L^p_t[0,1]}dz dy \Big\|_{L^p_x} \lc
\Big(\sum_{k}\|f_{k}\|_p^p\Big)^{1/p}.
\end{equation*}

Denoting the sidelength of $Q$ by $\ell(Q)$, we observe that, by
Minkowski's inequality, this would follow from the inequalities
\begin{equation}\label{sing}
\Big\|\sup_{x\in Q} \intslash_Q \sum_{2^k \ell(Q)\le 1} \intslash_Q
2^{-k\beta(p)}\|T_{k} f_{k}(y,\cdot)  -  T_{k}
f_{k}(z,\cdot)\|_{L^p_t[0,1]}dz dy \Big\|_{L^p_x}  \lc
\Big(\sum_k\|f_{k}\|_p^p\Big)^{1/p},
\end{equation}
\begin{equation}\label{dispk1}
\Big\|\sup_{x\in Q} \intslash_Q \sum_{2^k \ell(Q)> 2^{\alpha k}}
2^{-k\beta(p)} \| T_{k} f_{k}(y,\cdot)
\|_{L^p_t[0,1]}\,dy\Big\|_{L^p_x} \lc
\Big(\sum_k\|f_{k}\|_p^p\Big)^{1/p}.
\end{equation}
and
\begin{equation}\label{disp}
\Big\|\sup_{x\in Q} \intslash_Q \sum_{\,2^{\alpha k}\ge2^k \ell(Q)>
1} 2^{-k\beta(p)} \| T_{k} f_{k}(y,\cdot)
\|_{L^p_t[0,1]}\,dy\Big\|_{L^p_x} \lc
\Big(\sum_k\|f_{k}\|_p^p\Big)^{1/p}.
\end{equation}
First we handle \eqref{sing} and \eqref{dispk1} by standard
estimates and then prove the more interesting
inequality~\eqref{disp}.

\subsection*{{\it Proof of \eqref{sing}}}
It is enough to consider cubes $Q$ of diameter $\approx 2^{j}$ with
$x,y,z\in Q$ and $j+k\le0$. Let $H_k=\cF^{-1}[\widetilde
\chi(2^{-k}|\cdot|)]$, where $\widetilde{\chi}$ is smooth, equal to
one on $(1/2,2)$, and supported in $(1/3,3)$. Then
$$|\nabla H_k(w)|\lc  2^{k}   \frac{2^{kd}}{(1+2^k |w|)^{2N}}$$
with large $N\ge 10d$. Thus
\begin{align*}T_{k} f_{k}(y,t)- T_{k} f_{k}(z,t)&=\int\Big[H_k(y-w)- H_k(z-w)\Big] T_{k} f_{k}(w,t)
dw\\
&= \int \int_0^1 \biginn {(y-z)}{\nabla H_k(z+s(y-z)-w) T_{k}
f_{k}(w,t) }\,ds\,dw
\end{align*} which is controlled by a constant multiple of
$$
2^{j+k} \int \frac{2^{kd}}{(1+2^k|x-w|)^N} |T_{k} f_{k}(w,t)| dw.
$$
Thus, using the embedding $\ell^p\hookrightarrow \ell^\infty$, the
right hand side of \eqref{sing} is bounded by
\begin{align*}
&\,\,\,\,\,\Big\|\Big(\sum_j\Big| \sum_{0<k\le -j} \Big\|2^{j+k}
 \int \frac{2^{kd}}{(1+2^k|\cdot -w|)^N} 2^{-k\beta(p)}|
T_{k} f_{k}(w,\cdot)|
dw\Big\|_{L_t^p[0,1]}\Big|^p\Big)^{1/p}\Big\|_{L^p_x}
\\
&\lc \, \sum_{n\ge 0} 2^{-n}\! \left(\sum_{j<-n}  \Big\|
 \int \!\!\frac{2^{-(n+j)(d-\beta(p))}}{(1+2^{-(n+j)}|\cdot -w|)^N}|
T_{-(n+j)} f_{-(n+j)}(w,\cdot)|
dw\Big\|^p_{L^p(\R^d\times[0,1])}\right)^{1/p}
\\
&\lc \, \sum_{n\ge 0} 2^{-n} \left(\sum_{j<-n}
\big\|2^{(n+j)\beta(p)}
 T_{-(n+j)} f_{-(n+j)}
\big\|^p_{L^p(\R^d\times[0,1])}\right)^{1/p}.
\end{align*}
By \eqref{freq}  the last expression is dominated by
a constant times
\begin{align*}
&\sum_{n\ge 0} 2^{-n} \Big(\sum_{j<-n } \big\|
 f_{-(n+j)}
\big\|_p^p \Big)^{1/p} \lc \Big(\sum_k\|f_{k}\|_p^p\Big)^{1/p}
\end{align*}
and \eqref{sing} is proved.

\subsection*{{\it Proof of \eqref{dispk1}}}
For a fixed $t$, the operator $T_k$ has convolution kernel $K^t_k$
 given by
\begin{equation*}
K^t_k(x)= \frac{2^{kd}}{(2\pi)^{d}} \int_{\bbR^d} \chi(|\xi|)
e^{i(2^k \inn x\xi+2^{\alpha k}t|\xi|^\alpha)} d\xi.
\end{equation*}
Let $C(\alpha)=1$ if $\alpha\in(0,1)$ and let
$C(\alpha)= \alpha 2^{\alpha -1}$ if $\alpha\in(1,\infty)$, and
define
$$\fB_k(\alpha)=\{ x\,:\, |x|\le  4C(\alpha)
2^{k(\alpha-1)}\}.
$$ Integration by parts yields favorable bounds
in the complement of this ball. Observe that
\begin{equation*}
\big|\nabla_\xi \big( 2^k \inn x\xi+2^{\alpha
k}t|\xi|^\alpha\big)\big| \ge c_\alpha 2^k |x| \text{ if } x\notin
\fB_k(\alpha), \quad t\in [0,1],
\end{equation*}
and we obtain
\begin{equation} \label{smallness}
|K^t_k(x)|\le C_{N} 2^{kd}(1+2^k|x|)^{-N} \text{ if } x\notin
\fB_k(\alpha), \quad t\in [0,1].
\end{equation}
Consequently the main contribution of $K^t_k(x)$ comes when $|x|\le
4C(\alpha) 2^{k(\alpha-1)}$.

We prove the estimate \eqref{dispk1} by interpolation between
\begin{equation*}
\Big\|\sup_{x\in Q} \intslash_Q
 \sum_{2^k
\ell(Q)> 2^{\alpha k}} 2^{-k\beta(p)} \| T_{k} f_{k}(y,\cdot)
\|_{L^p_t[0,1]} dy\Big\|_\infty \lc \sup_k\|f_k\|_\infty
\end{equation*}
and
\begin{equation*}
\Big\|\sup_{x\in Q} \intslash_Q
 \sum_{2^k
\ell(Q)> 2^{\alpha k}} 2^{-k\beta(p)} \| T_{k} f_{k}(y,\cdot)
\|_{L^p_t[0,1]} dy\Big\|_r \lc \Big(\sum_k\|f_k\|_r^r\Big)^{1/r},
\end{equation*}
where $2+\frac{4}{d+1}<r<p$.

Now, as $\beta(p)>\beta(r)+\alpha(\frac{1}{r}-\frac{1}{p})$, the
$L^r$ bound is proven by applying
 H\"older in $k$, followed by the inequality
\begin{equation*}
\big\|\sup_{x\in Q} \intslash_Q \Big(\sum_k 2^{-k\big(\beta(r)+\alpha(\frac{1}{r}-\frac{1}{p})\big)r}\| T_{k}
f_{k}(y,\cdot) \|_{L^p_t[0,1]}^r\Big)^{1/r}dy \big\|_r
 \lc \Big(\sum_k\|f_k\|_r^r\Big)^{1/r}.
\end{equation*}
This is a consequence of the $L^r$--boundedness of  the
Hardy--Littlewood maximal operator, the interchange of the spatial integral and the sum, an application of \eqref{sob}, followed by Fubini and the estimate~\eqref{freq} (for the admissible exponent $r>2+4/(d+1)$).

To prove the $L^\infty$ bound, we let $Q^*$ be a  cube with the same
center as $Q$ satisfying $\ell(Q^*)= 10d C(\alpha) \ell(Q)$. By
Minkowski's inequality it will suffice to prove that
\begin{equation}\label{first}
 \intslash_Q
 \sum_{2^k
\ell(Q)> 2^{\alpha k}}
 2^{-k\beta(p)} \| T_{k} [f_{k}\chi_{Q^*}](y,\cdot)
\|_{L^p_t[0,1]}
dy\lc \sup_k\|f_k\|_\infty
\end{equation}
and
\begin{equation}\label{second}
 \intslash_Q
 \sum_{2^k
\ell(Q)> 2^{\alpha k}}
 2^{-k\beta(p)} \| T_{k} [f_{k} \chi_{\bbR^d\setminus Q^*}](y,\cdot)
\|_{L^p_t[0,1]}
dy\lc \sup_k\|f_k\|_\infty
\end{equation}
uniformly in $Q$.

To prove \eqref{first}, again we apply  H\"older a
number of times and \eqref{sob};
\begin{align*}
&\intslash_Q
 \sum_{k}
 2^{-k\beta(p)} \| T_{k} [f_{k}\chi_{Q^*}](y,\cdot)
\|_{L^p_t[0,1]}
dy\\
&\lc  |Q|^{-1/r} \sum_k2^{-k(\beta(p)-\alpha(\frac{1}{r}-\frac{1}{p}))}\Big(\int \|
T_{k} [f_{k}\chi_{Q^*}](y,\cdot) \|_{L^r_t[0,1]}^rdy\Big)^{1/r}
\\
&\lc \sup_k |Q|^{-1/r}
2^{-k\beta(r)}\Big(\int \|
T_{k} [f_{k}\chi_{Q^*}](y,\cdot) \|_{L^r_t[0,1]}^rdy\Big)^{1/r}
\\
&\lc \sup_k |Q|^{-1/r} \Big(\int  |f_k \chi_{ Q^*}|^rdx\Big)^{1/r}
\lc \sup_k\|f_k\|_\infty,
\end{align*}
where the third
inequality holds again by  the $L^r$ version of \eqref{freq}.

For \eqref{second}, we note  that
as $\ell(Q)>2^{k(\alpha-1)}$, and the function is supported in
the complement of $Q^*$ we can use the rapid decay in formula \eqref{smallness}.
We have that
\begin{align*}
&\intslash_Q
 \sum_{2^k
\ell(Q)> 2^{\alpha k}}
 2^{-k\beta(p)} \| T_{k} [f_{k}\chi_{\R^d\setminus Q^*}](y,\cdot)
\|_{L^p_t[0,1]}
dy
\\
\lc&  \sup_{k} \intslash_Q\left\|\int
\frac{2^{kd}}{(1+2^k|y-z|)^{2d}}
|f_k(z)| dz\right\|_{L^p_t[0,1]}dy\\
\lc& \sup_{k} \left\|\int
\frac{2^{kd}}{(1+2^k|\cdot-z|)^{2d}}
|f_k(z)| dz\right\|_{\infty}\lc \sup_{k} \|f_k\|_{\infty}.
\end{align*}
This concludes the proof of~\eqref{dispk1}

\subsection*{{\it Proof of \eqref{disp}}}\label{main}

We let $\zeta_j(x) =
(d2^{j})^{-d}$ if $|x|\le d 2^j$ and $\zeta_j(x)=0$ if $|x|\ge
d2^j$. Replacing cubes by dyadic balls we see that \eqref {disp}
follows from
\begin{equation}\label{displp}
\Big\|\sup_j \,\zeta_j* \sum_{k+j>0\atop{(\alpha-1)k\ge j}}
2^{-k\beta(p)} \| T_{k} f_{k}\|_{L^p_t[0,1]} \Big\|_{L^p_x} \lc
\flplp.
\end{equation}

Now, for fixed $k$ we cover $\Bbb R^d$ by  a grid $\cR_k^{\alpha
-1}$ consisting of cubes of sidelength $2^{k(\alpha-1)}$. For each
$R\in \cR_k^{\alpha -1}$ let $R^*$ be the cube with same center as
$R$ and sidelength $C(\alpha) 2^{k(\alpha-1) +10d}$
where $C(\alpha)$ is as in the proof of \eqref{dispk1}

For $R\in \cR_k^{\alpha -1}$ we let $f_{k}^R= \chi_R f_{k}$. We may
then split the left hand side of \eqref{displp} as $I +II$ where

\begin{equation*}
I=\Big\|\sup _j\,\zeta_j* \Big[\sum_{k+j>0\atop{(\alpha-1)k\ge j}}
 2^{-k\beta(p)} \| \sum_{R\in
\cR_{k}^{\alpha -1}}\chi_{R^*} T_{k} f_{k}^R \|_{L^p_t[0,1]}\Big]
\Big\|_{L^p_x}
\end{equation*}
and $II$ is the analogous expression where $ \chi_{R^*}$ is replaced with
$\chi_{\bbR^d\setminus R^*}$.

By Hardy--Littlewood, Minkowski, Fubini, (\ref{smallness}), and
Young's inequality,
 we dominate
\begin{align*}
II&\lc   \sum_{k\ge0} 2^{-k\beta(p)}\Big\| \sum_{R\in
\cR_{k}^{\alpha -1}} \chi_{\bbR^d\setminus R^*}\, T_{k} f_{k}^R
\Big\|_{L^p(\R^d\times[0,1])}
\\
&\lc  \sum_{k\ge0} 2^{-k\beta(p)}\Big(\int_0^1\int \Big[\int
\frac{2^{kd}}{(1+2^k|x-y|)^{2d}}\sum_{R\in \cR_{k}^{\alpha -1}}
|f_{k}^R (y)| dy\Big]^p dxdt\Big)^{1/p}
\\&\lc  \sum_{k\ge0} 2^{-k\beta(p)} \Big\|\sum_{R\in \cR_{k}^{\alpha -1}}
f_{k}^R \,\Big \|_p \lc\sup_k\|f_{k}\|_p \lc\flplp.
\end{align*}

Concerning the main term $I$ we use the  imbedding
$\ell^p\hookrightarrow \ell^\infty$,  interchange a sum and an
integral, and apply Minkowski's inequality,  so that
$$I\lc
\Big(\sum_j \Big\|\zeta_j* \Big[\sum_{k+j>0\atop{(\alpha-1)k\ge j}}
 2^{-k\beta(p)} \sum_{R\in \cR_{k}^{\alpha -1}} \chi_{R^*}\|
T_{k} f_{k}^R
\|_{L^p_t[0,1]}\Big] \Big\|^p_{L^p_x}\Big)^{1/p}.$$
Now for $R\in \cR^{\alpha-1}_k$, $R^*$  has sidelength greater than $2^j$,
so for fixed
$k$ the functions $\zeta_j*\chi_{R^*}$
have bounded overlap, uniformly in $k$. Setting $n=k+j>0$ and applying
Minkowski's inequality, we get $$I\lc  \sum_{n>0} I_n$$ where
\begin{equation*}
I_n=\Big(\sum_{j<n}\sum_{R\in \cR_{n-j}^{\alpha -1}}2^{-(n-j)\beta(p)p}\Big\| \zeta_j*
  \|
T_{n-j} f_{n-j}^R \|_{L^p_t[0,1]} \Big\|^p_{L^p_x}\Big)^{1/p}.
\end{equation*}
As before choose $r$ so that  $2+\frac{4}{d+1}<r<p$. It will suffice to show  that
\begin{equation}\label{Inest}
I_n \lc  2^{-nd (\frac{1}{r}-\frac{1}{p})}  \flplp.
\end{equation}

Observe that by Young's inequality the convolution with $\zeta_j$
maps $L^r(\bbR^d)$ to $L^p(\bbR^d)$ with operator norm $O(
2^{-jd(1/r-1/p)})$. Moreover by (\ref{sob}) we have
\begin{equation*}
\Big\|\|T_{n-j} f_{n-j}^R\|_{L_t^p[0,1]}\Big\|_{L_x^r}\lc 2^{(n-j)\alpha (\frac1r-\frac1p)}\Big\|
\|T_{n-j} f_{n-j}^R\|_{L_t^r[0,1]}\Big\|_{L_x^r}.
\end{equation*}
Thus we can bound
\begin{equation*}
I_n\lc \Big(\sum_j 2^{-jd(\frac1r-\frac1p)p} 2^{(n-j)\alpha
(\frac1r-\frac1p)p} 2^{-(n-j)\beta(p) p} \sum_{R\in
\cR_{n-j}^{\alpha -1}} \big\|  T_{n-j}
f_{n-j}^{R}\big\|_{L^r(\R^d\times[0,1])}^p \Big)^{\frac 1p}
\end{equation*}
which, by \eqref{freq}, is
\begin{equation*}
\lc \Big(\sum_j 2^{-jd(\frac1r-\frac 1p)p} 2^{(n-j)\alpha
(\frac1r-\frac 1p)p} 2^{-(n-j)\beta(p) p} \sum_{R\in
\cR_{n-j}^{\alpha -1}} 2^{(n-j)\beta(r)p}\big\|f_{n-j}^{R}\big\|_r^p
\Big)^{\frac 1p}.
\end{equation*}
Since $f_{n-j}^{R}$ is supported on the  cube $R$  of size
$2^{(n-j)(\alpha-1) d}$ we see by H\"older's inequality that the
last displayed expression is dominated by a constant times
\begin{equation*}
 \Big(\sum_j 2^{-jd(\frac 1r-\frac 1p)p} 2^{(n-j)\alpha (\frac 1r-\frac 1p)p}
2^{-(n-j)\beta(p) p} 2^{(n-j)\beta(r)p} 2^{(n-j)(\alpha-1) d (\frac
1r-\frac 1p)p} \!\!\!\!\!\sum_{R\in \cR_{n-j}^{\alpha -1}}
\!\!\big\|f_{n-j}^{R}\big\|_p^p \Big)^{\frac 1p}.
\end{equation*}
Now this simplifies, after
summation in $R$, to
\begin{align*}   I_n \lc  2^{-nd(\frac 1r-\frac 1p)}
\Big(\sum_j \|f_{n-j}\|_p^p\Big)^{\frac 1p}\le C  2^{-nd(\frac
1r-\frac 1p)} \flplp.
\end{align*}
This finishes the proof of \eqref{Inest} and thereby \eqref{disp}
and concludes the proof  of Theorem~\ref{trliz}. \qed


\medskip {\it Acknowledgement.} The first author  thanks  Gustavo Garrig\'os,
Manuel Portilheiro and Ana Vargas for helpful conversations, and
Luis Vega for bringing the $L^p$--conjecture for the Schr\"odinger
maximal operator to his attention. The authors also thank Jong-Guk
Bak and Sanghyuk Lee for a correction concerning the case $\alpha<1$.

\end{document}